\documentclass[12pt, reqno ]{amsart}
\usepackage{amsmath,amssymb,amsbsy,amsfonts,amsthm,latexsym, amsopn,amstext,amsxtra,euscript,amscd,mathrsfs,color,bm,multirow,tabularx, enumitem, xcolor,url,todonotes,dirtytalk,comment,lipsum}
  
\usepackage[colorlinks,linkcolor=blue,anchorcolor=blue,citecolor=blue]{hyperref}
\usepackage[noadjust]{cite}

\DeclareMathOperator{\Mod}{mod}

\begin{document}
\newtheorem{problem}{Problem}[section]
\newtheorem{theorem}{Theorem}[section]
\newtheorem{lemma}[theorem]{Lemma}
\newtheorem{prop}[theorem]{Proposition}

\theoremstyle{definition}
\newtheorem{crit}[theorem]{Criterion}
\newtheorem{claim}[theorem]{Claim}
\newtheorem{cor}[theorem]{Corollary}
\newtheorem{definition}{Definition}
\newtheorem{question}[theorem]{Question}
\newtheorem{rem}[theorem]{Remark}
\newtheorem{Note}[theorem]{Notation}

%%%%%%%%%%%%%%%%%%%%%%%%%
% Alphabet calligraphic %
%%%%%%%%%%%%%%%%%%%%%%%%%

\def\cA{{\mathcal A}}
\def\cB{{\mathcal B}}
\def\cC{{\mathcal C}}
\def\cD{{\mathcal D}}
\def\cE{{\mathcal E}}
\def\cF{{\mathcal F}}
\def\cG{{\mathcal G}}
\def\cH{{\mathcal H}}
\def\cI{{\mathcal I}}
\def\cJ{{\mathcal J}}
\def\cK{{\mathcal K}}
\def\cL{{\mathcal L}}
\def\cM{{\mathcal M}}
\def\cN{{\mathcal N}}
\def\cO{{\mathcal O}}
\def\cP{{\mathcal P}}
\def\cQ{{\mathcal Q}}
\def\cR{{\mathcal R}}
\def\cS{{\mathcal S}}
\def\cT{{\mathcal T}}
\def\cU{{\mathcal U}}
\def\cV{{\mathcal V}}
\def\cW{{\mathcal W}}
\def\cX{{\mathcal X}}
\def\cY{{\mathcal Y}}
\def\cZ{{\mathcal Z}}

%%%%%%%%%%%%%%%%%%%%%%%
% Alphabet blackboard %
%%%%%%%%%%%%%%%%%%%%%%%
\def\A{{\mathbb A}}
\def\B{{\mathbb B}}
\def\C{{\mathbb C}}
\def\D{{\mathbb D}}
\def\E{{\mathbb E}}
\def\F{{\mathbb F}}
\def\G{{\mathbb G}}
\def\I{{\mathbb I}}
\def\J{{\mathbb J}}
\def\K{{\mathbb K}}
\def\L{{\mathbb L}}
\def\M{{\mathbb M}}
\def\N{{\mathbb N}}
\def\O{{\mathbb O}}
\def\P{{\mathbb P}}
\def\Q{{\mathbb Q}}
\def\R{{\mathbb R}}
\def\S{{\mathbb S}}
\def\T{{\mathbb T}}
\def\U{{\mathbb U}}
\def\V{{\mathbb V}}
\def\W{{\mathbb W}}
\def\X{{\mathbb X}}
\def\Y{{\mathbb Y}}
\def\Z{{\mathbb Z}}

\def\ep{{\mathbf{e}}_p}
\def\eq{{\mathbf{e}}_q}
\def\cal#1{\mathcal{#1}}

\def\scr{\scriptstyle}
\def\\{\cr}
\def\({\left(}
\def\){\right)}
\def\[{\left[}
\def\]{\right]}
\def\<{\langle}
\def\>{\rangle}
\def\fl#1{\left\lfloor#1\right\rfloor}
\def\rf#1{\left\lceil#1\right\rceil}
\def\le{\leqslant}
\def\ge{\geqslant}
\def\eps{\varepsilon}
\def\mand{\qquad\mbox{and}\qquad}

\def\sssum{\mathop{\sum\ \sum\ \sum}}
\def\ssum{\mathop{\sum\, \sum}}
\def\ssumw{\mathop{\sum\qquad \sum}}

\def\vec#1{\mathbf{#1}}
\def\inv#1{\overline{#1}}
\def\num#1{\mathrm{num}(#1)}
\def\dist{\mathrm{dist}}

\def\fA{{\mathfrak A}}
\def\fB{{\mathfrak B}}
\def\fC{{\mathfrak C}}
\def\fU{{\mathfrak U}}
\def\fV{{\mathfrak V}}

\newcommand{\bflambda}{{\boldsymbol{\lambda}}}
\newcommand{\bfxi}{{\boldsymbol{\xi}}}
\newcommand{\bfrho}{{\boldsymbol{\rho}}}
\newcommand{\bfnu}{{\boldsymbol{\nu}}}

\def\GL{\mathrm{GL}}
\def\SL{\mathrm{SL}}

\def\Hba{\overline{\cH}_{a,m}}
\def\Hta{\widetilde{\cH}_{a,m}}
\def\Hb1{\overline{\cH}_{m}}
\def\Ht1{\widetilde{\cH}_{m}}

\def\flp#1{{\left\langle#1\right\rangle}_p}
\def\flm#1{{\left\langle#1\right\rangle}_m}

\def\Zm{\Z/m\Z}

\def\Err{{\mathbf{E}}}
\def\O{\mathcal{O}}

\def\cc#1{\textcolor{red}{#1}}
\newcommand{\fix}[1]{{\color{red} #1}}

\newcommand{\comm}[1]{\marginpar{%
\vskip-\baselineskip %raise the marginpar a bit
\raggedright\footnotesize
\itshape\hrule\smallskip#1\par\smallskip\hrule}}
\newcolumntype{L}{>{\raggedright\arraybackslash}X}

\def\xxx{\vskip5pt\hrule\vskip5pt}

\def\dmod#1{\,\left(\textnormal{mod }{#1}\right)}

%%%%%%%%%%%%%%%%%%
%% PAPER BEGINS %%
%%%%%%%%%%%%%%%%%%

\title{Norm-Euclideanity in cyclic fields}

\author[G.K. Bagger]{Gustav Kj\ae rbye Bagger}
\address{School of Science, The University of New South Wales Canberra, Australia}
\email{g.bagger@unsw.edu.au}
 
\date{\today}
\pagenumbering{arabic}

%\keywords{??}
%\subjclass[2010]{??}
\let\thefootnote\relax
\footnote{\textit{Affiliation}: School of Science, The University of New South Wales Canberra, Australia.}
\footnote{\textit{Key phrases}: norm-Euclideanity, character sums, least non-residues}
\footnote{\textit{2020 Mathematics Subject Classification}: 11A05, 11L40, 11A07}
\begin{abstract}
Consider a cyclic number field of prime degree $p$. For a fixed degree, there are only finitely many such fields which are norm-Euclidean. When $p=2$ or $3$, these have been completely determined. If instead $p>3$, one can bound the discriminant of such fields from above. We substantially sharpen these upper bounds via an explicit Burgess-type criterion and describe a method for completely resolving other small prime cases.
\end{abstract}

\maketitle
\section{Introduction}
Let $p\geq 3$ be prime and let $K$ be a cyclic (number) field of degree $p$ over $\Q$. Let $\cO_K$, $f$ and $\Delta$ denote the ring of integers, conductor and discriminant of $K$, respectively. By the conductor-discriminant formula, $\Delta=f^{p-1}$. Call $K$ \emph{norm-Euclidean} if, for every $\alpha,\beta \in \cO_K$ with $\beta\neq 0$, there exists some $\gamma \in \cO_K$ such that $\vert N(\alpha-\gamma\beta)\vert < \vert N(\beta) \vert$, where $N=N_{K/\Q}$ denotes the absolute norm map. Since it is necessary for a norm-Euclidean number field to be a unique factorisation domain, we assume that $h_K=1$, the class number of $K$. By genus theory, $f$ must be either a prime congruent to $ 1 \Mod{p}$ or $f=p^2$. For ease of notation, we will write $L= \ln f$.

For a fixed prime $p\geq 3$, Heilbronn proved that there are only finitely many norm-Euclidean Galois fields of degree $p$, although this result is inexplicit. However, in the setting of cyclic quadratic and cyclic cubic fields, the norm-Euclidean fields have been completely determined~\cite{ChatlandDavenport1950},~\cite{BaggerNorm2026}. McGown~\cite{McGown2012} gave the first explicit bound on the conductor and discriminant of any norm-Euclidean cyclic field of prime degree. This was subsequently improved by Lezowski and McGown.
\begin{prop} \label{P: Lez and McG}
Consider a cyclic field $K$ of degree $p> 3$ and fix an integer $2 \leq r \leq 10$. There exists an effectively computable constant $D_\mathrm{LM}(r)>0$ such that if
$$ D_\mathrm{LM}(r)(p-1)^rL^{5/2} < f^{\frac{r-1}{4r}}, $$
then $K$ is not norm-Euclidean.
\end{prop}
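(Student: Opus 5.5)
The plan follows the Heilbronn--Lemmermeyer--McGown approach: reduce non-norm-Euclideanity to the existence of small integers with prescribed $p$-th power residue behaviour modulo $f$, then produce such integers with an explicit Burgess-type character sum bound. Assume $h_K=1$, so $f=q$ is a prime with $q\equiv 1 \Mod{p}$; the case $f=p^2$ gives only finitely many fields and is handled by direct inspection. Let $\chi$ be a Dirichlet character of order $p$ modulo $q$ whose kernel corresponds to $K$. A rational prime $\ell\neq q$ then splits completely in $K$ exactly when $\chi(\ell)=1$, and is inert otherwise.

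The first step is the algebraic criterion, in the form used by Lemmermeyer and McGown. Take $\beta$ a prime of $\cO_K$ above $q$, so $\beta$ is totally ramified and $|N(\beta)|=q$. Suppose there are positive integers $a,b$ with $a+b=q$ such that neither $a$ nor $b$ is divisible by any prime splitting completely in $K$, equivalently $\chi$ is nontrivial on every prime factor of $a$ and of $b$. Then $a$ cannot be an ideal norm except in trivial ways. The standard argument shows that $\alpha=a$ admits no $\gamma$ with $|N(a-\gamma\beta)|<q$: any such element would have norm congruent to $a^p$ up to units modulo $q$. Hence its norm would be a product of norms of primes, each a split prime or a $p$-th power. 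A size comparison using $a,b<q$ then gives a contradiction. More conveniently, I would use the refined version with small parameters. It suffices to find a prime $q_1$ with $\chi(q_1)\neq 1$ and $q_1$ small, together with an integer $n<q$ in a suitable residue class all of whose prime factors are non-split, and $n$ of size about $q^{1-1/p}$ times powers of $q_1$. The exponent $r$ and the factor $(p-1)^r$ will come from how many such conditions are imposed.

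The second step is analytic. Using $\mathbf{1}_{\chi(m)\neq 1}=1-\frac1p\sum_{j=0}^{p-1}\chi^j(m)$ on primes, count integers in a short interval $[x,x+H]$ free of split primes. I would sieve by a weighted sum that detects "$\chi(m)=\omega$ for a fixed $p$-th root of unity $\omega\neq1$". Expanding gives a main term of size $H/p$ and error terms involving $\sum_{x<m\le x+H}\chi^j(m)$ for $1\le j\le p-1$. Apply an explicit Burgess bound: for every nonprincipal $\chi$ modulo prime $q$ and $r\ge2$,
$$\Big|\sum_{x<m\le x+H}\chi(m)\Big|\le C(r)H^{1-1/r}q^{\frac{r+1}{4r^2}}(\ln q)^{1/(2r)}.$$
The $p-1$ nontrivial powers, with the combinatorics of the weights, produce the factor $(p-1)$. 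Handling products of two or more characters in the construction (for example, the shifted condition on $b=q-a$) introduces further polynomial factors in $p-1$, up to $(p-1)^r$ after optimising $H$. The choice $H\approx q^{\frac{r+1}{4r}}$ times log powers makes the error smaller than the main term. Combining with the size constraint from step one yields precisely the threshold $D_{\mathrm{LM}}(r)(p-1)^rL^{5/2}<f^{\frac{r-1}{4r}}$. The restriction $2\le r\le10$ is just the range where the explicit Burgess constant $C(r)$ has been computed.

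The main obstacle is the bookkeeping in the second step. One must get a positive lower bound for integers $a$ in an interval with both $a$ and $q-a$ free of split primes, while keeping every constant explicit and the $p$-dependence only polynomial of degree $r$. My first attempt would be to replace the prime-factor condition by the simpler condition $\chi(a)=\chi(q-a)^{-1}\cdot\omega$ restricted to $a$ that are products of a small non-residue prime and a smooth part. This reduces everything to incomplete character sums over intervals, to which the explicit Burgess bound applies; the log power $L^{5/2}$ absorbs the divisor-type losses.
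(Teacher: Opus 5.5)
\textbf{Gap 1: the algebraic criterion in Step~1 is false as stated.} This is the decisive gap. Apply norm-Euclideanity to $\alpha=c\in\Z$ and the prime $\beta$ above $f$. You get $\delta\equiv c\pmod{\beta}$ with $|N(\delta)|<f$, and since $\beta$ is Galois-stable, $N(\delta)\equiv c^p\pmod f$. Hence $|N(\delta)|\in\{a,f-a\}$, where $a$ is the residue of $c^p$. So Heilbronn's criterion needs two things: $\chi(a)=1$, and neither $a$ nor $f-a$ is the norm of an integral ideal, i.e.\ each contains some inert prime to an exponent not divisible by $p$.

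The condition ``no split prime factors'' gives neither. A power $q^p$ of an inert prime $q$ is a norm, and if $\chi(a)\neq 1$ the argument says nothing about $a$. Taking $\alpha=a$, as you do, only controls the pair $a^p$, $f-a^p$ modulo $f$, not $a$, $f-a$. Your argument never uses $p>3$. For $p=3$ it would show that the cubic field of conductor $7$ is not norm-Euclidean via $2+5=7$ ($2$ and $5$ are non-cubes mod $7$, hence inert), which is false.

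The missing idea is the construction behind Propositions~\ref{P: Heilbronn} and~\ref{P: Lemma 5.1}. Let $q_1<q_2$ be the two smallest inert primes and take $a=kq_2m$, where $(m,q_1q_2)=1$, $\chi(m)=\chi(q_2)^{-1}$, and $k$ is a small cofactor with $\chi(k)=1$ (e.g.\ $1\le k<q_1$), chosen so that $q_1\parallel f-a$. Then $\chi(a)=1$ and $q_2\parallel a$, so neither $a$ nor $f-a$ is a norm.

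This turns your binary problem into a unary one. Your version asks for both $a$ and $q-a$ to be free of split primes, which does not reduce to incomplete character sums. Your proposed substitute does not help either: $\chi(-1)=1$ gives $\chi(q-a)=\chi(a)$, so $\chi(a)=\chi(q-a)^{-1}\omega$ just says $\chi(a)^2=\omega$. The unary problem is to find a single $m\le H\approx f/(q_1q_2)$ with prescribed $\chi(m)$ and $(m,q_1q_2)=1$.

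\textbf{Gap 2: the analytic step does not produce the stated power of $L$.} The factor $L^{5/2}$ is not a divisor-type loss. It comes from the explicit bounds on the least inert primes, of shape $q_1\ll f^{1/4}L$ and $q_2\ll f^{1/4}L^{3/2}$, which give $H\gg f^{1/2}L^{-5/2}$. The $L^{1/2}$ saving in Theorem~\ref{T: main} is exactly the sharper product bound of Proposition~\ref{P: q1q2}. Your plan never bounds $q_2$ at all.

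Moreover, a black-box Burgess bound carrying $(\ln f)^{1/(2r)}$, inserted into the detector for $\chi(m)=\omega$, yields $m$ only once $H\gg(p-1)^rf^{\frac{r+1}{4r}}L^{1/2}$. Against $H\approx f^{1/2}L^{-5/2}$ this gives only the weaker threshold $D'(r)(p-1)^rL^{3}<f^{\frac{r-1}{4r}}$.

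To avoid the logarithm you must run the Burgess argument directly for the existence question, as in Theorem~\ref{T: criterion}:
\begin{itemize}
\item Bound $\sum_{j=1}^{p-1}\sum_{x}\bigl|\sum_{n<h}\chi^j(x+n)\bigr|^{2r}$ from below on the disjoint intervals $\mathcal{I}(q,t)$, $\mathcal{J}(q,t)$. There few $n$ have $\chi(x+n)=\chi(q)^{-1}\omega$, and Lemma~\ref{L: average A_j} supplies the factor $(p-1)^{-(2r-1)}$.
\item Bound the same sum from above by the Weil-type bound $(p-1)f^{1/2}h^{2r}W(f,h,r)$.
\end{itemize}
With $h\approx f^{1/2r}$ this needs only $H\gg(p-1)^rf^{\frac{r+1}{4r}}$, and $H\approx f^{1/2}L^{-5/2}$ then gives precisely $D_{\mathrm{LM}}(r)(p-1)^rL^{5/2}<f^{\frac{r-1}{4r}}$.

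Two further points. Your target size ``$q^{1-1/p}$ times powers of $q_1$'' does not combine with any of these bounds to give that threshold. And the small cases $q_1\in\{2,3,7\}$, excluded from Proposition~\ref{P: Lemma 5.1}, still need separate treatment.
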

This is a reformulation of~\cite[Proposition 2.4]{LezowskiMcGown2017}. We improve on this result, both in terms of the leading constant and by a $L^{1/2}$ saving.
\begin{theorem} \label{T: main}
Consider a cyclic field $K$ of degree $p>3$ and fix an integer $2 \leq r< p$. Assume $2p < f^{1/2r}$, $f\geq 10^{40}$ and set $D_p(f,r) = 3.311(2r-1 +r!)^{1/2}(1-2pf^{-1/2r})^{-r}$.
If
\begin{equation} \label{Eq: main}
D_p(f,r)(p-1)^rL^{2}  < f^{\frac{r-1}{4r}},
\end{equation}
then $K$ is not norm-Euclidean.
\end{theorem}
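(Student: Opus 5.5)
Theorem~\ref{T: main} will follow from Heilbronn's criterion in the explicit form used by McGown and by Lezowski--McGown. If $K$ is norm-Euclidean with conductor $f$ (prime, $f\equiv 1 \Mod p$, since $f=p^2$ is excluded by $f\ge 10^{40}$ and $2p<f^{1/2r}$), let $\chi$ be a Dirichlet character modulo $f$ of order $p$ attached to $K$. The criterion states that whenever $q_1<q_2$ are primes with $\chi(q_1)\neq 1$, $\chi(q_2)\neq 1$ and $\chi(q_1)\neq\chi(q_2)$, there are small constraints forcing $f$ to be bounded in terms of $q_1q_2$; roughly, one needs $q_1 q_2 \gg f^{1/2}$ up to the precise form. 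Equivalently, it suffices to exhibit two primes $q_1,q_2$ below some explicit $X$ (with $X$ of size $f^{1/4}\cdot(\text{small})$) lying in distinct nontrivial cosets of the subgroup of $p$-th powers. So the plan is to bound from above the smallest such pair by an explicit Burgess estimate, and to check that the hypothesis \eqref{Eq: main} makes this bound smaller than what the criterion tolerates.

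The key step is an explicit Burgess bound. I would take an explicit Burgess inequality for a nonprincipal character modulo a prime $f$ over an interval of length $H$:
\[
\Bigl|\sum_{N<n\le N+H}\chi(n)\Bigr| \le C(r)\, H^{1-1/r} f^{\frac{r+1}{4r^2}} (\ln f)^{1/r},
\]
with $C(r)$ coming from the $2r$-th moment count $\le (2r-1)H^{r}+r!\,\cdot$ stuff, so that $C(r)^r$ involves $(2r-1+r!)^{1/2}$. This accounts for the factor $(2r-1+r!)^{1/2}$ and the constant $3.311$. The $L^{2}$ rather than $L^{5/2}$ would come from using a smoothed or weighted sum rather than a sharp cutoff, or from a more careful handling of the averaging over shifts, saving $L^{1/2}$.

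Next comes the counting argument. Suppose all primes $q\le X$ with $\chi(q)\ne1$ share a single value $\chi(q)=\zeta$, or all have $\chi(q)=1$. Then $\chi(n)$ is determined by $\Omega$-type data of $n$ and is "biased": summing $\sum_{j=1}^{p-1}\chi^j(n)$ over $n\le X$ (using all nontrivial powers $\chi^j$, which is where $(p-1)$ enters) gives $p\cdot\#\{n\le X:\chi(n)=1\}-X$ on one side. Under the assumption, the set $\{n\le X:\chi(n)=1\}$ or its complement is large, with a proportion bounded below by an explicit amount (e.g.\ using integers whose prime factors lie in a restricted set). Comparing with $(p-1)$ times the Burgess bound yields $X \le (p-1)^{r}\cdot C\, f^{\frac{r+1}{4r}}L^{2}$. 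The factor $(1-2pf^{-1/2r})^{-r}$ arises from discarding multiples of small primes $\le 2p$ or from the error term in the density estimate, which is why $2p<f^{1/2r}$ is needed. The $r$-th power then converts to the exponent $f^{\frac{r-1}{4r}}$ when balanced against the threshold $f^{1/2}$ in Heilbronn's criterion. This gives exactly \eqref{Eq: main}.

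The main obstacle is the explicit Burgess step with the improved logarithm power and the precise constant $3.311$. This requires redoing the Burgess argument with explicit constants, carefully optimizing the shift parameters, and bounding the multiplicative energy term. I would try first to adapt the explicit Burgess bound of Treviño/McGown, replacing its $\ln f$ factor from counting $\#\{(a,x)\}$ with a sharper divisor-type bound. The condition $f\ge10^{40}$ would absorb lower-order terms.
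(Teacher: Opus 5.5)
Your proposal has a genuine gap. It misstates Heilbronn's criterion, and as a result it never identifies where the theorem actually comes from.

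\textbf{The criterion and the source of $L^2$.} The criterion available (McGown's Theorem~3.1 and Lezowski--McGown's Lemma~5.1) is not about exhibiting two primes in distinct cosets below $f^{1/4}$. With $q_1<q_2$ the two smallest prime $p$-th power non-residues, one needs a \emph{third} integer $m$ with $(m,q_1q_2)=1$, $\chi(m)=\chi(q_2)^{-1}$ and $\max\{q_1,\tfrac{2.1}{p}f^{1/p}L\}\,q_2m\le f$. Your counting argument detects $\chi(n)=1$ or locates primes, so it never produces such an $m$. The input needed is an \emph{upper} bound $q_1q_2<2.581f^{1/2}L^2$ (Proposition~\ref{P: q1q2}), not $q_1q_2\gg f^{1/2}$. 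This bound leaves room $H\ge H_C=f^{1/2}/(2.581L^2)$ for $m$. The $L^2$, and the $2.581$ in $3.311\approx 2.581\,\pi/\sqrt6$, come entirely from it; the character-sum side contributes no logarithm at all.

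Conversely, if you locate $m$ with any explicit Burgess interval bound carrying $(\ln f)^{1/2r}$, you get $m\ll (p-1)^rf^{(r+1)/4r}L^{1/2}$. Compared against $H_C$, that puts you back at the $L^{5/2}$ of Lezowski--McGown. So ``smoothing'' is not the saving. Your bookkeeping, with threshold $f^{1/2}$ carrying no log and $L^2$ on the Burgess side, does not correspond to any available input.

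\textbf{The missing idea: the Burgess-type criterion.} Theorem~\ref{T: criterion} bypasses interval estimates entirely. The argument runs as follows.
\begin{enumerate}
\item On the pairwise disjoint Farey-type intervals $\mathcal{I}(q,t),\mathcal{J}(q,t)$ with $q\le X=H/h$ and $2HX\le f$, one has $q(z+n)-tf\in[-H,H]$. So if no admissible $m\le H$ exists, $\chi(z+n)$ equals $\chi(q)^{-1}\omega$ for at most $\lambda_h(v)$ of the $h$ shifts.
\item Orthogonality, Cauchy--Schwarz and the power-mean inequality give $\sum_{j=1}^{p-1}|A_j(z)|^{2r}\ge(h-p\lambda_h(v))^{2r}/(p-1)^{2r-1}$. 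This is what controls a \emph{prescribed} root of unity despite possible cancellation among the others.
\item A lattice-point count supplies at least $E_u(X)\frac{6}{\pi^2}\frac{\phi(u)}{u\sigma(u)}X^2h$ such $z$.
\item Trevi\~no's moment bound $S_j\le f^{1/2}h^{2r}W(f,h,r)$, with $h=\lceil f^{1/2r}\rceil$, closes the argument.
\end{enumerate}
In particular, $(1-2pf^{-1/2r})^{-r}$ is essentially $(h/(h-2p))^r$. It reflects that only about two shifts per block are divisible by $q_1$ or $q_2$; it is not a sieve of primes up to $2p$.

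\textbf{Coprimality and small $q_1$.} Your plan omits the places where the coprimality condition genuinely bites.
\begin{itemize}
\item If $q_1$ or $q_2$ is small relative to $h$ (or to $T=16rp\,h/(h-2p)$), then $\lambda_h(v)$ is large. Such primes must be moved into $u$, restricting to $u\parallel q$. One then has to check the losses $u\sigma(u)/\phi(u)$ and $E_u(X)$ against the larger $H=\min\{H_1,H_2,H_3\}$, case by case.
\item Small $q_1$ (in particular $q_1\in\{2,3,7\}$, which Lemma~5.1 excludes) must be disposed of separately via McGown's results.
\end{itemize}
With all this in place, the stated theorem follows from Theorem~\ref{T: main-ugly} via $C_p(f,h,r)<D_p(f,r)$ (Lemma~\ref{L: C_p<D_p}).
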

Note that the technical restriction $f\geq 10^{40}$ does not impede us: the bound in Proposition~\ref{P: Lez and McG} is minimal at $p=5$ where $f\geq 10^{55}$. By applying Theorem~\ref{T: main} for small primes $p>3$, we obtain a necessary condition for norm-Euclideanity solely dependant on the conductor $f$.
\begin{prop} \label{P: comp-table}
Let $K$ be a norm-Euclidean cyclic field of prime degree $p$ where $3<p<100$. Then its conductor $f$ must be bounded above by $f_0$ given in Table~\ref{Tab: conductors}.
\end{prop}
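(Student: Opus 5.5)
The plan is to apply Theorem~\ref{T: main} prime by prime, treating it as a purely numerical criterion in $f$. Fix a prime $3<p<100$. For each admissible integer $2\le r<p$, set
$$
F_{p,r}(f) = \frac{r-1}{4r}\ln f - \ln D_p(f,r) - r\ln(p-1) - 2\ln\ln f .
$$
Condition \eqref{Eq: main} says exactly that $F_{p,r}(f)>0$. The factor $(1-2pf^{-1/2r})^{-r}$ in $D_p(f,r)$ decreases in $f$ once $2p<f^{1/2r}$, so $\ln D_p(f,r)$ is nonincreasing in $f$. The derivative of the remaining terms is $\frac{r-1}{4r f}-\frac{2}{f\ln f}$, which is positive for $\ln f>8r/(r-1)$; this certainly holds when $f\ge 10^{40}$. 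Hence $F_{p,r}$ is eventually increasing on $[\max(10^{40},(2p)^{2r}),\infty)$. It follows that if $F_{p,r}(f_1)>0$ at some point $f_1$ of that range, then every $f\ge f_1$ satisfies \eqref{Eq: main}, and $K$ is not norm-Euclidean.

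For each $p$ I will then define
$$
f_0(p)=\min_{2\le r<p}\ \inf\{f_1\ge \max(10^{40},(2p)^{2r}) : F_{p,r}(f_1)>0\}.
$$
This value is the entry of Table~\ref{Tab: conductors}, possibly rounded up to a convenient power of $10$ or similar. To compute it, I will take logarithms, $x=\ln f$. For each $r$ I will solve the one-variable equation $F_{p,r}(e^x)=0$ by bisection or Newton iteration on the increasing branch, and keep the best $r$. Intuitively the optimal $r$ balances the exponent $\frac{r-1}{4r}$, which approaches $1/4$, against the growth of $(p-1)^r$ and $(r!)^{1/2}$. So only moderate $r$ need to be scanned, and $r$ up to roughly $10$ suffices in practice; for $p=5$ only $r\in\{2,3,4\}$ is allowed. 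The contrapositive then gives the proposition: a norm-Euclidean $K$ must have $f<f_0$.

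The main obstacle is making the numerics rigorous rather than conceptual. First, the threshold $f_0$ must lie in the range where the hypotheses $f\ge 10^{40}$ and $2p<f^{1/2r}$ hold. For $p=5$ the bound is around $10^{40}$-ish, as the remark after Theorem~\ref{T: main} suggests. If the root falls below $10^{40}$, I will simply take $f_0=10^{40}$. Second, the monotonicity has to be verified on the whole interval, including the mild non-monotone factor $(1-2pf^{-1/2r})^{-r}$ near the lower endpoint. I would handle this by evaluating $F_{p,r}$ with interval arithmetic, or with safely rounded-up constants, at the reported $f_0$, and then invoking the derivative argument above for all larger $f$.
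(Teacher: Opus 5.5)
Your proposal is correct and follows essentially the same route as the paper: take logarithms in Criterion~\eqref{Eq: main}, show the resulting function of $f$ is increasing once $f\ge 10^{40}$, then for each $2\le r<p$ locate numerically the smallest admissible $f_0$ and keep the best $r$. (The paper finds $r=4$ best for $p=5$ and $r=3$ for the larger primes; your splitting of $\ln D_p(f,r)$ off from the remaining terms gives the same derivative as the paper's direct computation.) Two small remarks. The factor $(1-2pf^{-1/2r})^{-r}$ is monotone on the whole range, as you showed yourself, so no extra care is needed near the lower endpoint. And proving the table only requires exhibiting one admissible $r$ per $p$, not optimality over $r$.
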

\begin{table}[!h]
    \centering
    
    \begin{tabular}{c|c||c|c||c|c}
        \multicolumn{1}{c}{$p$} & \multicolumn{1}{c||}{$f_0$} &
        \multicolumn{1}{c}{$p$} & \multicolumn{1}{c||}{$f_0$} &
        \multicolumn{1}{c}{$p$} & \multicolumn{1}{c}{$f_0$} \\
        \hline
        5  & $4.2 \times 10^{40}$ & 37 & $8.7 \times 10^{59}$ & 73 & $7.0 \times 10^{65}$ \\
        7  & $2.3 \times 10^{44}$ & 41 & $6.9 \times 10^{60}$ & 79 & $3.4 \times 10^{66}$ \\
        11 & $7.3 \times 10^{48}$ & 43 & $1.8 \times 10^{61}$ & 83 & $8.9 \times 10^{66}$ \\
        13 & $2.9 \times 10^{50}$ & 47 & $1.1 \times 10^{62}$ & 89 & $3.6 \times 10^{67}$ \\
        17 & $9.0 \times 10^{52}$ & 53 & $1.2 \times 10^{63}$ & 97 & $2.0 \times 10^{68}$ \\
        19 & $9.4 \times 10^{53}$ & 59 & $1.1 \times 10^{64}$ & & \\
        23 & $5.1 \times 10^{55}$ & 61 & $2.0 \times 10^{64}$ & & \\
        29 & $6.1 \times 10^{57}$ & 67 & $1.3 \times 10^{65}$ & & \\
        31 & $2.4 \times 10^{58}$ & 71 & $4.1 \times 10^{65}$ & & \\
    \end{tabular}
    
    \caption{Conductor bounds for norm-Euclidean cyclic fields of degree $p$}
    \label{Tab: conductors}
\end{table}
\vspace{-5 pt}
These bounds improve substantially upon previous conductor bounds: For each prime $p$, we obtain an upper bound roughly 15 orders of magnitude smaller than~\cite{LezowskiMcGown2017}. We give a simple version of Theorem~\ref{T: main}, which might prove useful for applications.
\begin{prop} \label{P: main-simp}
Consider a cyclic field $K$ of prime degree $p>3$. If
\begin{equation*}
2.177 (p-1)  < f^{1/16}L^{-1},
\end{equation*}
then $K$ is not norm-Euclidean.
\end{prop}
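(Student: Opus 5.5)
The plan is to deduce Proposition~\ref{P: main-simp} from Theorem~\ref{T: main} with the fixed choice $r=4$, since $\frac{r-1}{4r}=\frac{3}{16}$ matches the exponent in the statement. Note $r=4<p$ holds because $p>3$ gives $p\ge 5$. Assume $2.177(p-1)<f^{1/16}L^{-1}$. Cubing gives
\[
2.177^3(p-1)^3L^3<f^{3/16}.
\]
This is the target~\eqref{Eq: main} with $r=4$, except that we have $(p-1)^3L^3$ where the theorem needs $D_p(f,4)(p-1)^4L^2$.

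\textbf{Step 1: the remaining hypotheses of Theorem~\ref{T: main}.} We must check $f\ge 10^{40}$ and $2p<f^{1/8}$. Since $p\ge 5$ and $L\ge 1$, the assumption gives $f^{1/16}>2.177\cdot 4\cdot L$, hence
\[
\frac{f^{1/16}}{\ln f}>8.7.
\]
This forces $f$ to be astronomically large; solving numerically should give $f\ge 10^{40}$, and if not I would bootstrap. Also $2p\le 2(p-1)+2$, which is much smaller than $f^{1/16}L^{-1}$ times a constant. So $2p<f^{1/16}\le f^{1/8}$, and in fact $2pf^{-1/8}\le f^{-1/16}$ is tiny. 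The correction factor $(1-2pf^{-1/8})^{-4}$ is therefore essentially $1$, and $D_p(f,4)\le 3.311\cdot(7+24)^{1/2}\cdot(1+\epsilon)\approx 18.44(1+\epsilon)$.

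\textbf{Step 2: absorbing the extra factor of $p-1$.} We need $D_p(f,4)(p-1)^4L^2<f^{3/16}$. Writing $f^{3/16}=f^{1/16}\cdot f^{2/16}$ and using the hypothesis twice gives $f^{2/16}>2.177^2(p-1)^2L^2$. It then suffices to show $D_p(f,4)(p-1)^2<2.177^2f^{1/16}$. By the hypothesis once more, $(p-1)<f^{1/16}/(2.177L)$. Since $(p-1)^2$ is quadratic this is not immediate, so I would instead distribute more carefully: we need
\[
D\,(p-1)^4L^2<f^{3/16},
\]
while from the hypothesis $(p-1)^4L^4<f^{4/16}/2.177^4$. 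So it suffices that
\[
D\cdot 2.177^{-4}f^{1/4}L^{-2}\le f^{3/16},
\quad\text{i.e.}\quad
D\le 2.177^4L^2f^{-1/16}.
\]
That fails for large $f$, so with $r=4$ the reduction only works if $p$ is large relative to $f$.

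The real route is therefore a case split using Table~\ref{Tab: conductors}/Proposition~\ref{P: comp-table}-type reasoning. I would choose $r$ depending on the size of $p$ relative to $f$, taking $r\to\infty$ as $f^{1/16}$ dominates, since $\frac{r-1}{4r}\to\frac14>\frac1{16}$. Concretely, given the hypothesis, $f^{1/16}/L>2.177(p-1)$, so $p-1<f^{1/16}$. Take $r=2$: the exponent is $\frac18$, and we need $D_p(f,2)(p-1)^2L^2<f^{1/8}$. Squaring the hypothesis gives exactly $2.177^2(p-1)^2L^2<f^{1/8}$. Since $2.177^2\approx 4.739$ and $D_p(f,2)=3.311\cdot 5^{1/2}(1-2pf^{-1/4})^{-2}\approx 7.40(1+\epsilon)$, this is not directly enough; $r=2$ gives the wrong exponent relation, since the hypothesis should correspond to $\frac{r-1}{4r}\cdot\frac1r=\frac1{16}$, i.e.\ $r=2$ after taking an $r$-th root: $D^{1/2}(p-1)L<f^{1/16}$ requires $D^{1/2}\le 2.177$, i.e.\ $D\le 4.739$. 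So the intended choice is indeed $r=2$ with the $L^2$ halved as $L$, matching $D_p(f,2)^{1/2}\approx\sqrt{3.311\cdot\sqrt5\,(1+\epsilon)}\approx 2.721$. That exceeds $2.177$, so the constant must come from a sharper handling of the $r!$ term (for $r=2$, $2r-1+r!=5$).

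\textbf{Main obstacle.} The main obstacle is reconciling the constant. I would first recheck whether the correct exponent pairing is $r=2$ with the root taken as above, and if the constant does not close, redo the final optimisation inside the proof of Theorem~\ref{T: main}, exploiting $f\ge 10^{40}$ and $p\ge5$ to shrink $3.311$. Verifying the hypotheses $2p<f^{1/4}$ and $f\ge10^{40}$ from $f^{1/16}/\ln f>8.7$ is routine numerics.
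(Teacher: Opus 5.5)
Your final reduction, taking $r=2$, is the paper's proof. In Theorem~\ref{T: main}, \eqref{Eq: main} at $r=2$ is the square of $D_p(f,2)^{1/2}(p-1)<f^{1/16}L^{-1}$, and $D_p(f,2)^{1/2}=3.311^{1/2}5^{1/4}(1-2pf^{-1/4})^{-1}$. Your arithmetic is right: $3.311^{1/2}5^{1/4}\approx 2.72096$, so this route gives the constant $2.721$ and cannot give $2.177$.

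The paper's own proof opens with ``By assumption, $2.721(p-1)<f^{1/16}L^{-1}$'' and ends with $D_p(f,2)^{1/2}\le 2.721$. So the $2.177$ in the statement is evidently a misprint for $2.721$, and the paper proves only that version. With $2.721$, your argument is complete once you write out the checks you deferred:
\begin{itemize}
\item Since $p\ge 5$, the hypothesis gives $f^{1/16}>10.88\,L$, which forces $f>10^{49}$.
\item It also gives $2p<f^{1/16}$, so $2pf^{-1/4}<f^{-3/16}<10^{-9}$.
\item Hence $2p<f^{1/4}$, and $D_p(f,2)^{1/2}<2.72096\,(1+2\cdot 10^{-9})<2.721$.
\end{itemize}

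As an attempt at the statement as printed, though, your proposal has a genuine gap. It stops at the constant, and the repair you suggest (re-optimising inside Theorem~\ref{T: main} to shrink $3.311$) is not available.
\begin{itemize}
\item You would need $D_p(f,2)\le 2.177^2\approx 4.74$, whereas $D_p(f,2)>3.311\sqrt5\approx 7.40$.
\item Re-optimising $h$ as in Section~\ref{S: future} only replaces $5$ by $2\sqrt6$, giving about $2.707$. Even discarding the $r!$ term entirely leaves $(3.311\sqrt3)^{1/2}\approx 2.39$.
\item The factor $3.311\approx 2.581\pi/\sqrt6$ enters through $H_C$. In Case Ia of the proof of Theorem~\ref{T: main-ugly}, $H$ can be as small as $H_C$. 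Lowering it would need a constant of roughly $1.65$ in place of $2.581$ in Proposition~\ref{P: q1q2}, which is a new input rather than a re-optimisation.
\end{itemize}

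No other $r$ helps either. If $p-1$ is just below $f^{1/16}/(2.177L)$, then \eqref{Eq: main} becomes, up to a factor close to $1$, $f^{(r-2)^2/(16r)}<2.177^{r}L^{r-2}/D_p(f,r)$. This is false at $r=2$, and false for every $r\ge 3$ once $L\ge 400$. So the $2.177$ version is not a consequence of Theorem~\ref{T: main} for any choice of $r$. The same computation explains why your $r=4$ attempt could not close: the exponent $(r-2)^2/(16r)$ vanishes only at $r=2$.
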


Define the quantities $W(f,h,r) := 2r-1 + r!f^{1/2}h^{-r}$ and $\lambda_h(v) := \sum_{q \mid v} \lceil h/q \rceil$ where $h$ is a positive integer and the sum is over prime divisors of $v$. Set the empty sum $\lambda_h(1) := 0$. Our method relies on a generalisation of~\cite[Theorem 3.3]{BaggerNorm2026}.
\begin{theorem}\label{T: criterion}
Let $p > 3$ be prime and $\chi$ be a $p$-th order Dirichlet character modulo a prime $f$.
Let $\omega$ denote a non-trivial $p$-th root of unity. Let $r,h,u,v\in\N$ where $u=u_{1}u_{2}\dots u_{k}$ and $v=v_{1}v_{2}\dots v_{\ell}$ are products of distinct primes satisfying $u_{i}<h$, $v_j<f$ and $(u,v)=1$. Let $H \in (0,f)$ be a real number and set $X=H/h$. Suppose $r<p$, $p\lambda_h(v)<h$, $\frac{X}{u}\geq 1$, $X\geq 2$, and $2HX\leq f$, and let
\begin{equation}
E_u(X) = 1 - \frac{\pi^2}{6}\sigma(u) \left( \frac{\sigma(u)}{4} + \frac{\phi(u)}{u} + \frac{\phi(u)}{X} \right)\frac{1}{X},
\end{equation}
where $\sigma(u) = \sum_{d \mid u} d$; for $u=1$, the term $\frac{\phi(u)}{u}$ is replaced by $2$. If $E_u(X) > 0$ and
\begin{equation} \label{Eq: main-criterion}
    \frac{1}{E_u(X)}\frac{\pi^2}{6}\frac{\sigma(u)}{\phi(u)}uhf^{1/2}\left(\frac{(p-1)h}{h-p \lambda_h(v)}\right)^{2r}\frac{W(f,h,r)}{H^2}<1,
\end{equation}
then there exists a positive integer $m \leq H$ satisfying $(m,uv)=1$ and $\chi(m)=\omega$. 
\end{theorem}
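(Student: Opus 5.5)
We argue by contradiction: assume that every positive integer $m\le H$ with $(m,uv)=1$ satisfies $\chi(m)\neq\omega$. The plan is a Burgess-type argument, modelled on \cite[Theorem 3.3]{BaggerNorm2026}. We compare a lower bound and an upper bound for the moment
$$\Sigma:=\sum_{j=1}^{p-1}\sum_{x \bmod f}\Big|\sum_{a=1}^{h}\chi^j(x+a)\Big|^{2r}.$$
For the upper bound we use the classical Burgess moment estimate. Each $\chi^j$ with $1\le j\le p-1$ is non-principal modulo the prime $f$, and $r<p$, so the standard estimate gives
$$\sum_{x\bmod f}\Big|\sum_{a=1}^h\chi^j(x+a)\Big|^{2r}\le r!\,f h^r+(2r-1)f^{1/2}h^{2r}=f^{1/2}h^{2r}W(f,h,r).$$
Hence $\Sigma\le (p-1)f^{1/2}h^{2r}W(f,h,r)$. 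If this estimate is not available in exactly this form, I would reprove it by the usual Weil-bound argument, using the fact that $\chi^j$ has order $p>r$.

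\emph{Short sums.} For the lower bound, fix integers $y_1,\dots,y_h\le H$. Let $A$ be the set of indices $a$ with $(y_a,uv)=1$, and set $h'=|A|$. Arrange the $y_a$ so that each of them is coprime to $u$. Then the indices outside $A$ are those $a$ with $q\mid y_a$ for some prime $q\mid v$. For $y_a=n+am$ with $(m,q)=1$, each such $q$ divides at most $\lceil h/q\rceil$ of the $y_a$. By hypothesis, $\chi(y_a)$ for $a\in A$ is a $p$-th root of unity different from $\omega$. Orthogonality in $j$ therefore gives
$$\sum_{j=0}^{p-1}\omega^{-j}\sum_{a\in A}\chi^j(y_a)=p\cdot\#\{a\in A:\chi(y_a)=\omega\}=0.$$
It follows that $\sum_{j=1}^{p-1}\omega^{-j}S_j=-h'+(\text{error from }a\notin A)$, where $S_j=\sum_{a\le h}\chi^j(y_a)$. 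The terms with $a\notin A$ contribute at most $(p-1)\lambda_h(v)$ in total. A careful version of this count should yield
$$\sum_{j=1}^{p-1}|S_j|\ge h-p\lambda_h(v).$$
This is where the hypothesis $p\lambda_h(v)<h$ enters. By Hölder's inequality,
$$\sum_{j=1}^{p-1}|S_j|^{2r}\ge (p-1)^{1-2r}\big(h-p\lambda_h(v)\big)^{2r}.$$

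\emph{Choice of shifts.} Next we choose $x\equiv n\bar m \pmod f$, so that $\chi^j(x+a)=\bar\chi^j(m)\chi^j(n+am)$ and therefore $|S_j(x)|=|\sum_a\chi^j(n+am)|$. Here $m\le X$ runs over suitable multiples of $u$, which uses $X/u\ge 1$. With that choice $n+am\equiv n\pmod u$, so it suffices to take $(n,u)=1$. The variable $n$ runs over a range for which $n+am\le H$ for all $a\le h$; since $am\le hX=H$, this is essentially a range of length comparable to $H$. Note also that $(m,q)=1$ for each prime $q\mid v$ because $(u,v)=1$, as needed above. The condition $2HX\le f$ guarantees that distinct reduced fractions $n/m$ give distinct residues $x$. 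Counting reduced fractions with these coprimality conditions is a Farey/Möbius computation with error terms. It gives at least
$$\frac{6}{\pi^2}\frac{\phi(u)}{u\,\sigma(u)}\,\frac{H^2}{h}\,E_u(X)$$
admissible $x$. The term $E_u(X)$ collects the error terms from the sums over divisors of $u$ and the boundary terms; in the case $u=1$ the term $\phi(u)/u$ is replaced by $2$.

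\emph{Conclusion.} Combining the two bounds gives
$$\frac{6}{\pi^2}\frac{\phi(u)}{u\sigma(u)}\frac{H^2}{h}E_u(X)(p-1)^{1-2r}\big(h-p\lambda_h(v)\big)^{2r}\le (p-1)f^{1/2}h^{2r}W(f,h,r).$$
Rearranged, this is exactly the negation of \eqref{Eq: main-criterion}, which is the desired contradiction.

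I expect the main obstacle to be the explicit lattice-point count. It must produce precisely the factor $E_u(X)$ and the constant $\frac{\pi^2}{6}\frac{\sigma(u)}{\phi(u)}u$, while handling the requirements $u\mid m$, $(n,u)=1$, reducedness of $n/m$ and the range constraints simultaneously. A secondary point of care is the exact bookkeeping of the terms with $a\notin A$ in the orthogonality step, which must produce the factor $h-p\lambda_h(v)$ rather than something weaker.
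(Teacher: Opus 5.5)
Your architecture is the paper's. You use the Trevino/Weil moment bound with $W'=W$ for $r<p$, and your shifts $x\equiv n\bar m\pmod f$ are the Burgess intervals written as Farey fractions ($m\leftrightarrow q$, $n+m\leftrightarrow qz-tf$, reducedness $\leftrightarrow (t,q)=1$), followed by a lattice-point count. Your pointwise lower bound is a valid and shorter alternative to Lemma~\ref{L: r=1} and the power-mean step. The identity $\sum_{j=1}^{p-1}\omega^{-j}S_j=p\,\#\{a:\chi(y_a)=\omega\}-h$ with the triangle inequality gives an $\ell^1$ bound, where the paper uses Parseval and Cauchy--Schwarz for an $\ell^2$ bound. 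Both end at $(h-p\lambda_h(v))^{2r}/(p-1)^{2r-1}$. No ``careful version'' is needed: $h'\ge h-\lambda_h(v)$ already gives $h'-(p-1)\lambda_h(v)\ge h-p\lambda_h(v)$.

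The genuine gap is a factor $2$ in the count. As set up, your shifts only produce $1\le y_a\le H$, since you apply the hypothesis to positive integers and impose only $n+am\le H$. These shifts correspond to the intervals $\mathcal{I}(q,t)$ alone, and there are only about half as many as you claim. For $u=1$ their number is
$$\sum_{m\le X}\frac{\phi(m)}{m}\bigl(H-(h-1)m\bigr)\approx\frac{6}{\pi^2}HX-\frac{3}{\pi^2}hX^2=\frac{3}{\pi^2}\frac{H^2}{h},$$
whereas you assert at least $\frac{6}{\pi^2}\frac{H^2}{h}E_1(X)$ with $E_1(X)$ close to $1$. In general the positive shifts give main term $\frac{3}{\pi^2}\frac{\phi(u)}{u\sigma(u)}\frac{H^2}{h}$. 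Your final inequality is therefore off by a factor $2$ and does not yield \eqref{Eq: main-criterion}.

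The missing idea is that $\chi$ has odd order $p$, so $\chi(-1)=1$ and $\chi(y)=\chi(|y|)$. You must also use shifts with $-H\le y_a\le -1$, which are the paper's intervals $\mathcal{J}(q,t)$. Your hypothesis (via $|y_a|$) and your orthogonality step apply to them unchanged, and $2HX\le f$ still keeps both families distinct modulo $f$. Only the union of the two families is the quantity bounded in \eqref{Eq: N_u}.

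A smaller slip: $(u,v)=1$ does not imply $(m,v_j)=1$ for the primes $v_j\mid v$, because $m$ is a multiple of $u$ that may also be divisible by $v_j$. The conclusion survives anyway. If $v_j\mid m$, then $y_a\equiv n\pmod{v_j}$ and $v_j\nmid n$ by reducedness, so $v_j$ divides none of the $y_a$. This is exactly the paper's case $v_j\mid q$.
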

The classical version of this result is recovered by replacing $\lambda_h(v) = \ell$, which additionally holds in the setting $p=3$. This follows from $\lceil h/v_j \rceil = 1$ whenever one imposes the assumption $h\leq v_j$. 

The outline of this paper is as follows. In Section~\ref{S: Heilbronn}, we describe Heilbronn's criterion for norm-Euclideanity. By bounding a certain character sum from above and below, we prove Theorem~\ref{T: criterion}. In Section~\ref{S: conductors}, we prove the auxiliary result Theorem~\ref{T: main-ugly}; first by assuming existence of small least non-residues, and then in general. Theorem~\ref{T: main} will be an almost immediate consequence of this. We also describe the method for computing the conductor bound in Table~\ref{Tab: conductors} and prove Proposition~\ref{P: main-simp}. Finally, in Section~\ref{S: future}, we discuss some future directions and potential improvements.

\section{Detecting failure of norm-Euclideanity} \label{S: Heilbronn}
The aim of this section is to motivate and prove Theorem~\ref{T: criterion}. Let $q_1<q_2$ denote the two smallest prime $p$-th non-residues modulo $f$. By class field theory, this is equivalent to the smallest primes inert across the extension $K/\Q$. Whenever the field $K$ is fixed, we will denote by $\chi$ a $p$-th order Dirichlet character modulo a prime $f$. We have the following sufficient condition for failure of norm-Euclideanity.
\begin{prop} \label{P: Heilbronn}
Suppose $(f,p)=1$ and that there exists $m \in \Z^+$ with
$$ (m,q_1q_2)=1 \quad \text{and} \quad \chi(m) = \chi(q_2)^{-1}, $$
such that any of the following conditions hold:
\begin{enumerate}[label=(\arabic*)\quad]
    \item $mq_2k \not\equiv f \Mod q_1^2$, \quad $\forall k\text{ such that } 1\leq k < q_1$, \quad $(q_1-1)(q_2m-1)\leq f$
    \item $q_1\neq 2,3$, \quad $3q_1q_2m\ln q_1 < f$
    \item $q_1\neq 2,3,7$, \quad $2.1 q_1q_2m\ln q_1 < f$
    \item $q_1=2,q_2 \neq 3$, \quad $3q_2m < f$
    \item $q_1=3,q_2 \neq 5$, \quad $5q_2m < f$.
\end{enumerate}
Then $K$ is not norm-Euclidean.
\end{prop}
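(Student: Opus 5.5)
The plan is to run Heilbronn's division argument with divisor the prime above $f$. Since $(f,p)=1$, genus theory forces $f$ to be a prime with $f \equiv 1 \Mod p$, and $f$ is totally ramified in $K$. Write $f\cO_K=\pi^p$, so that $N(\pi)=\pm f$ and $\cO_K/\pi\cong \F_f$.

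\textbf{Step 1 (reduction to norms in residue classes).} Suppose $K$ is norm-Euclidean and take $\beta=\pi$. For every $\alpha$ there is then $x\equiv\alpha \Mod \pi$ with $0<|N(x)|<f$. The Galois group acts trivially on $\cO_K/\pi$, so every conjugate of $x$ is congruent to $x$ modulo $\pi$, and hence $N(x)\equiv x^p \Mod \pi$. Because $N(x)$ is a rational integer, this gives $N(x)\equiv a^p \Mod f$, where $a\in\Z$ represents the class of $\alpha$.

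The $p$-th powers $a^p$ run over all of $\ker\chi$ modulo $f$. So for every residue $r$ with $\chi(r)=1$, some integer $n\equiv r\Mod f$ with $0<|n|<f$ must be $\pm$ a norm. If $0<r<f$, the only candidates are $n=r$ and $n=r-f$. Since $-1=N(-1)$, it suffices to produce $r\in(0,f)$ with $\chi(r)=1$ such that neither $r$ nor $f-r$ is the norm of an element.

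\textbf{Step 2 (the non-norm test).} Since $h_K=1$, an integer $n$ coprime to $f$ is a norm exactly when every inert prime divides $n$ to an exponent divisible by $p$. In particular, a prime $q_i$ dividing $n$ exactly once shows that $n$ is not a norm.

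\textbf{Step 3 (case (1)).} Take $r=q_2mk$ with $1\le k<q_1$ chosen so that $q_2mk\equiv f \Mod{q_1}$; this is possible because $(q_2m,q_1)=1$.

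\begin{itemize}
\item Every prime factor of $k$ is below $q_1$, hence a $p$-th power residue. So $\chi(k)=1$ and $\chi(r)=\chi(q_2)\chi(m)=1$.
\item Since $(m,q_2)=1$ and $k<q_2$, the prime $q_2$ divides $r$ exactly once, so $r$ is not a norm.
\item By construction $q_1\mid f-r$, and the hypothesis $mq_2k\not\equiv f \Mod{q_1^2}$ gives $q_1^2\nmid f-r$. So $f-r$ is not a norm either.
\item For the size condition, $r\le q_2m(q_1-1)$. Combined with $(q_1-1)(q_2m-1)\le f$, this should give $r<f$ and $f-r>0$. The boundary cases, where $r$ could equal $f$ or exceed it by at most $q_1-1$, will be handled by a short check.
\end{itemize}

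This contradicts Step 1.

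\textbf{Step 4 (cases (2)--(5)), the main obstacle.} Here the congruence condition modulo $q_1^2$ is dropped, so $k$ must be allowed to range beyond $q_1$. I would take $r=q_2mk$ with $k$ in a longer range, chosen so that:
\begin{itemize}
\item $\chi(k)=1$;
\item $q_2\nmid k$;
\item $f-q_2mk$ is divisible by $q_1$ to an exponent not divisible by $p$.
\end{itemize}
For the last condition, I would choose $k$ modulo $q_1^2$ among the $q_1-1$ lifts of the required class modulo $q_1$, excluding the single lift that is $\equiv f(q_2m)^{-1}\Mod{q_1^2}$.

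The difficulty is that $k$ may now exceed $q_1$, so $\chi(k)=1$ is no longer automatic. I would first try restricting $k$ to products of primes below $q_1$, and count how far one must go to hit the required class. The factor $\ln q_1$ and the constants $3$ and $2.1$ should come out of this counting, and the small exceptions ($q_1=2,3,7$, and $q_2=3,5$) would be treated by hand using the fact that the only available $k$ there are powers of $2$ or $3$. I expect these explicit constants to be the delicate part; I would follow Heilbronn's and Lemmermeyer's original bookkeeping for them rather than rederive them.
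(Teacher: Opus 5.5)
Your Steps 1--3 are the standard Heilbronn reduction behind the result the paper cites (the paper gives no proof of its own, only McGown's Theorem 3.1), and they do settle case (1). The boundary check you postpone is immediate. Since $\chi(r)=1$ we have $r\neq f$, and $r\equiv f\pmod{q_1}$, so $|r-f|\ge q_1$. Together with $r\le (q_1-1)q_2m\le f+q_1-1$ this gives $r\le f-q_1<f$.

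The genuine gap is Step 4: cases (2)--(5) are deferred to ``the original bookkeeping'' rather than proved, and your sketch would not give the stated conditions.

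For (2)--(3), only the situation $q_1^2\mid f-q_2mk_0$ needs treatment, where $k_0\in[1,q_1-1]$ is your choice from Step 3. What is missing is an explicit $k$ with $\chi(k)=1$, $q_2\nmid k$, $k\equiv k_0\pmod{q_1}$, $k\not\equiv k_0\pmod{q_1^2}$ and $k<c\,q_1\ln q_1$. One construction that works is the following.
\begin{itemize}
\item Let $t$ be the least prime with $t\nmid k_0$, choose $k'\in[1,q_1-1]$ with $tk'\equiv k_0\pmod{q_1}$, and set $k=tk'$.
\item Since $\prod_{\ell<t}\ell$ divides $k_0<q_1$, you get $t<q_1$ once $q_1\ge 5$, so $\chi(k)=1$; this is where $q_1\neq 2,3$ enters.
\item The same divisibility gives $t\le 3\ln q_1$, and even $t\le 2.1\ln q_1$ except when $q_1=7$, $k_0=6$, $t=5$. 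This is consistent with the exclusion of $7$ in (3).
\item Because $t\nmid k_0$ and $k_0,\,tk'<q_1^2$, you get $tk'\not\equiv k_0\pmod{q_1^2}$.
\item Finally $r=q_2mtk'<t\,q_1q_2m$, so $r<f$ under (2) or (3).
\end{itemize}

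For (4)--(5), restricting to ``powers of $2$ or $3$'' is not enough. The usable $k$ are the integers whose prime factors are below $q_2$ and different from $q_1$. With powers of $2$ alone, for $q_1=3$ the class $2\bmod 3$ offers only $k\in\{2,8\}$, which gives $8q_2m<f$ instead of $5q_2m<f$. The stated bound comes from $k\in\{k_0,k_0+3\}\subseteq\{1,2,4,5\}$, where $\chi(5)=1$ holds precisely because $q_2\neq 5$. Similarly, when $q_1=2$ one uses $k\in\{1,3\}$, which needs $q_2\neq 3$.
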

This is~\cite[Theorem 3.1]{McGown2012} based on Heilbronn's criterion for norm-Euclideanity. Since the case of cyclic cubic fields has been completely resolved, we will concern ourselves with the case when $p>3$. We may assume $f$ is prime and also $f \equiv 1 \Mod{p}$ since this is necessary for norm-Euclideanity, see~\cite[Theorem 4.1]{McGown2012}. We have an additional sufficient condition for failure of norm-Euclideanity.
\begin{prop}[{\unskip\cite[Lemma 5.1]{LezowskiMcGown2017}}] \label{P: Lemma 5.1}
Suppose $(f,p)=1$ and that there exists $m \in \Z^+$ with
$$ (m,q_1q_2)=1 \quad \text{and} \quad \chi(m) = \chi(q_2)^{-1}, $$
such that $q_1 \neq 2,3,7$ and
$$ f\geq \text{max}\left\{q_1,\frac{2.1}{p}f^{1/p}L\right\}q_2m. $$
Then $K$ is not norm-Euclidean.
\end{prop}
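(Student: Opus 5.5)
The plan is to follow Heilbronn's argument exactly as in the proof of condition (3) of Proposition~\ref{P: Heilbronn} (that is,~\cite[Theorem 3.1]{McGown2012}), and to change only the final counting estimate that produces the factor $2.1\,q_1\ln q_1$. I have not reconstructed that final estimate in detail, so the last step below is the least certain part.

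\emph{Setting up the contradiction.} Suppose $K$ is norm-Euclidean. Put $c = q_2 m$; then $(c,q_1)=1$ and $\chi(c)=\chi(q_2)\chi(m)=1$. Apply the Euclidean property to $\alpha = c$ (or a suitable integer in a residue class determined by $c$) and $\beta = q_1$. This gives $\gamma\in\cO_K$ with $0<|N(c-\gamma q_1)|<q_1^p$.

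\emph{Factorisation of the norm.} Every prime $q<q_1$ satisfies $\chi(q)=1$, and an inert prime contributes at least $q_1^p$ to a norm. Hence $n=|N(c-\gamma q_1)|$ is composed only of split primes and possibly $f$. Moreover $n\equiv c^p \pmod{q_1}$. Heilbronn's device then produces an integer relation of the form $c\,k \equiv f\,t \pmod{q_1}$ with $k<q_1$ and $t$ a norm. Combined with $\chi(c)=1$ and $\chi(q_2)\neq 1$, this relation should force
\[
f < \Lambda\, q_2 m
\]
for some explicit quantity $\Lambda$. In~\cite{McGown2012}, $\Lambda$ arises by bounding a sum of the shape $\sum_{k<q_1} 1/k$ or a count of admissible multipliers, which yields $\Lambda = 2.1\,q_1\ln q_1$ (with $q_1\neq 2,3,7$ needed for the numerics).

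\emph{Replacing $\Lambda$.} This is the step I expect to be the main obstacle. I would bound the same quantity in two ways and keep the smaller bound.

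\begin{enumerate}[label=(\roman*)]
\item The first bound is trivial: $\Lambda \le q_1$, by bounding each admissible multiplier individually. I expect this to cover the regime where $q_1$ is small relative to $f^{1/p}$.
\item The second bound exploits that admissible multipliers are integers $t$ with $\chi(t)=1$ (norms coprime to $f$ are $p$-th power residues). They therefore have relative density about $1/p$ among integers, so the harmonic-type sum should be replaced by roughly $\frac1p\sum_{t\le f^{1/p}}(\cdot)$. Since the relevant range is cut off at about $f^{1/p}$ (norms of elements of size $f^{1/2p}$-ish in each embedding), this gives $\Lambda \lesssim \frac{2.1}{p} f^{1/p} L$.
\end{enumerate}

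Making the density-$1/p$ saving explicit and uniform, presumably by an elementary counting argument over a complete residue system modulo $f$ rather than a character-sum bound, is the delicate part.

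\emph{Conclusion.} Assuming $f \ge \max\{q_1, \tfrac{2.1}{p}f^{1/p}L\}\,q_2 m$ then contradicts the inequality $f<\Lambda q_2 m$ obtained above. Hence $K$ is not norm-Euclidean.
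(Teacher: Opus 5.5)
Your first step cannot produce a contradiction. With $\beta=q_1$ and $\alpha=c\in\Z$ prime to $q_1$, simply take $\gamma\in\Z$ with $c-\gamma q_1=c'\in[1,q_1-1]$. Then $|N(c-\gamma q_1)|=(c')^p<q_1^p$, so the Euclidean inequality holds trivially, and no relation of the form $ck\equiv ft \pmod{q_1}$ can be extracted from it.

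The missing idea is that Heilbronn's criterion divides by the ramified prime, not by an inert one. Since $h_K=1$, the unique prime above $f$ is $\pi\cO_K$ with $|N(\pi)|=f$. Because this prime is fixed by $\mathrm{Gal}(K/\Q)$, any $\xi\equiv a_1 \pmod{\pi}$ with $a_1\in\Z$ satisfies $N(\xi)\equiv a_1^p \pmod f$. So suppose $f=a+b$ with $a,b\geq 1$ and $\chi(a)=1$. Pick $a_1$ with $a_1^p\equiv a\pmod f$ and apply the Euclidean property to $(a_1,\pi)$. The resulting $\xi$ has $0<|N(\xi)|<f$ and $N(\xi)\equiv a\pmod f$, hence $N(\xi)\in\{a,-b\}$. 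Since $p$ is odd, this means $a$ or $b$ is a norm. A contradiction therefore needs a splitting of $f$ into two positive non-norms with $\chi(a)=1$.

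Take $b=q_2mk$ with $1\leq k<q_1$ and $q_2mk\equiv f\pmod{q_1}$, and set $a=f-b=q_1x$. Then:
$\chi(a)=\chi(b)=\chi(k)=1$, because $\chi(-1)=1$ and every prime below $q_1$ splits;
$b$ is a non-norm, because $v_{q_2}(b)=1$;
$a$ is a non-norm, provided $a>0$ and $p\nmid v_{q_1}(a)$.

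Your second step misidentifies where the two terms of the maximum come from, and puts them in the wrong regimes. The only remaining obstruction is $q_1^p\mid a$.

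If $q_1>f^{1/p}$, this cannot occur, because $0<a<f<q_1^p$. Moreover $f\geq q_1q_2m>kq_2m$ gives $a>0$. So the term $q_1$ handles \emph{large} $q_1$, with no logarithm and no use of $q_1\neq 2,3,7$.

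If $q_1<f^{1/p}$ (equality is impossible since $f$ is prime), then $2.1\,q_1\ln q_1<2.1\,f^{1/p}\ln(f^{1/p})=\frac{2.1}{p}f^{1/p}L$. The hypothesis then gives $2.1\,q_1q_2m\ln q_1<f$, and condition (3) of Proposition~\ref{P: Heilbronn} applies directly; this is where $q_1\neq 2,3,7$ enters.

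The factor $1/p$ is just $\ln(f^{1/p})=L/p$. It is not a density-$1/p$ saving from $p$-th power residues, and there is no harmonic sum over norms up to $f^{1/p}$. Note also that ``keeping the smaller bound'' would prove the statement with $\min$ in place of $\max$, which is more than is claimed. The maximum appears precisely because each side of the case split ($q_1$ versus $f^{1/p}$) needs a different term.
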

Theorem~\ref{T: main} will ultimately follow from invoking Theorem~\ref{T: criterion} where $H$ is chosen as a function of $f$ such that $m$ satisfies one of the conditions in Propositions~\ref{P: Heilbronn} and~\ref{P: Lemma 5.1}.
\subsection{Bounding the character sum from below}
We outline the analogous proof of Theorem~\ref{T: criterion} in the cubic case since it serves as a model for our generalisation. The proof relies in part on bounding the absolute value of the sum $\sum_{n=0}^{h-1}\chi(z+n)$ from below under the assumption that $\chi(z+n)$ rarely equals some fixed cube root of unity $\omega$. Assume also that $\chi(z+n)\neq 0$ for all $0\leq n<h$ and take $\eta$ to be an arbitrary $p$-th root of unity. Let
$$ a_t(z,\eta) := \#\{0\leq n < h: \eta^{-1}\chi(z+n) = \zeta^t\}, $$
where $0 \leq t < p$ and $\zeta = e^{2 \pi i/p}$. Note that $\sum_t a_t(z,\eta) = h$. If $p=3$, we assume $a_0(z,\omega) \leq \ell$, say, and argue as follows. Since $\Re(\zeta)=\Re(\zeta^2) = -\frac{1}{2}$, we have
$$ \left\vert \sum_{n=0}^{h-1}\chi(z+n)\right\vert = \vert a_0(z,\omega)+a_1(z,\omega)\zeta+a_2(z,\omega)\zeta^2\vert \geq \left\vert a_0(z,\omega) - \frac{h-a_0(z,\omega)}{2}\right\vert. $$
Under the assumption that $p\ell = 3\ell \leq h$, we may conclude
$$ \left\vert \sum_{n=0}^{h-1}\chi(z+n)\right\vert \geq \frac{h-3\ell}{2}. $$
Under the same assumptions, one can show that this sum cannot be too large, leading to a contradiction and effectively proving Theorem~\ref{T: criterion} for cyclic cubic fields. 

The main obstruction to generalising this result is the distribution of $p$-th roots of unity. We are only restricting the behaviour of one such root and one might imagine that the other roots could cancel sufficiently to make $\vert \sum_{n=0}^{h-1}\chi(z+n)\vert$ arbitrarily small. The key is to instead consider sums over all conjugates $\chi^j$ simultaneously. For an integer $z$, define the quantity
$$ A_j(z) := \sum_{n=0}^{h-1} \chi(z + n)^j \quad \text{where } 1 \leq j < p. $$
If $z$ satisfies $\chi(z+n) \neq 0$ for every $0\leq n < h$, set $A_0(z) := h$. For each $1\leq j < p$, define the $j$-th conjugate sum
$$ S_j(f,h,r) := \sum_{x\Mod{f}} \vert A_j(x)\vert^{2r}. $$
In this notation, the cubic argument corresponds to bounding the sum $S_1$ from both directions and arriving at a contradiction. We will instead proceed by considering the sum over all non-trivial conjugates $S_p(f,h,r) := \sum_{j = 1}^{p-1}S_j(f,h,r)$.

\begin{lemma}\label{L: r=1} Let $\eta$ be a $p$-th root of unity and fix an integer $z$ such that $\chi(z+n) \neq 0$ for all $0\leq n<h$. Then
$$\sum_{j=1}^{p-1} \vert A_j(z)\vert^2 \geq \frac{(h-pa_0(z,\eta))^2}{p-1}. $$
\end{lemma}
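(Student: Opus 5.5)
Write $a_t = a_t(z,\eta)$ and $\chi(z+n) = \eta\,\zeta^{t_n}$. Since $\eta^j$ has modulus one,
$$ A_j(z) = \eta^j \sum_{t=0}^{p-1} a_t \zeta^{jt}, \qquad\text{so}\qquad |A_j(z)| = \Bigl|\sum_{t} a_t\zeta^{jt}\Bigr| =: |\hat a(j)|. $$
Here $\hat a$ is the discrete Fourier transform of the vector $(a_0,\dots,a_{p-1})$ on $\Z/p\Z$. The plan is to combine Parseval on $\Z/p\Z$ with Cauchy--Schwarz and Fourier inversion at the single point $t=0$.

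First, Fourier inversion at $t=0$ gives
$$ p\,a_0 = \sum_{j=0}^{p-1}\hat a(j) = h + \sum_{j=1}^{p-1}\hat a(j), $$
using $\hat a(0)=\sum_t a_t = h$. Therefore
$$ \sum_{j=1}^{p-1}\eta^{-j}A_j(z) = p a_0 - h . $$

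Next, apply Cauchy--Schwarz to the $p-1$ terms:
$$ (h-pa_0)^2 = \Bigl|\sum_{j=1}^{p-1}\eta^{-j}A_j(z)\Bigr|^2 \le (p-1)\sum_{j=1}^{p-1}|A_j(z)|^2 . $$
Dividing by $p-1$ gives exactly the claimed inequality.

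There is no real obstacle here. The only point needing care is the inversion identity $\sum_{j=0}^{p-1}\zeta^{jt} = p\cdot\mathbf{1}_{t=0}$, which is what lets the count $a_0$ be recovered from the conjugate sums. This identity uses that $p$ is prime (indeed only that $\zeta$ is a primitive $p$-th root of unity) and that no $\chi(z+n)$ vanishes, so every term lands in some class $t$ and $\sum_t a_t = h$. One could also compute $\sum_{j=0}^{p-1}|\hat a(j)|^2 = p\sum_t a_t^2$ via Parseval and bound $\sum_t a_t^2$ from below given $a_0$, but the Cauchy--Schwarz route above is shorter and gives the stated constant directly.
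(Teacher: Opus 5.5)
Your proof is correct, but it takes a different route from the paper's, essentially the dual one.

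\textbf{The paper's route.} It first proves the full Parseval identity $\sum_{j=0}^{p-1}|A_j(z)|^2 = p\sum_{t=0}^{p-1}a_t(z,\eta)^2$ and subtracts $|A_0(z)|^2=h^2$. It then applies Cauchy--Schwarz to the counts $a_1,\dots,a_{p-1}$, whose sum is $h-a_0$. Finally it simplifies $-h^2+pa_0^2+p(h-a_0)^2/(p-1)$ to $(h-pa_0)^2/(p-1)$.

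\textbf{Your route.} You use only the single linear relation $\sum_{j=1}^{p-1}\eta^{-j}A_j(z)=pa_0-h$, obtained by Fourier inversion at $t=0$. You then apply Cauchy--Schwarz on the frequency side, to the $p-1$ conjugate sums themselves.

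\textbf{Why the constants agree.} Both arguments have the same equality case: $a_1=\dots=a_{p-1}$, which is equivalent (by inversion) to $\eta^{-j}A_j(z)$ being independent of $1\le j\le p-1$. Hence both produce the identical constant.

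\textbf{What each buys.} Your argument is shorter and avoids both the quadratic Parseval expansion and the closing algebra. The paper's version records the exact identity $\sum_{j=1}^{p-1}|A_j(z)|^2=p\sum_{t}a_t(z,\eta)^2-h^2$. That makes the slack in the inequality explicit, and it would be the natural starting point if one had control over more than one of the counts $a_t(z,\eta)$.
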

\begin{proof}
For any choice of $\eta$, we have
$$ A_j(z) = \sum_{t=0}^{p-1} a_t(z,\eta) \left(\eta\zeta^t\right)^j = \eta^j\sum_{t=0}^{p-1} a_t(z,\eta) \zeta^{tj}. $$
Then
\begin{align*}
\sum_{j=0}^{p-1} \vert A_j(z)\vert^2 = & \sum_{j=0}^{p-1}\vert\eta^j\vert^2\left\vert\sum_{t=0}^{p-1} a_t(z,\eta) \zeta^{tj}\right\vert^2 \\
= & \sum_{j=0}^{p-1}\left(\sum_{t_1=0}^{p-1} a_{t_1} \zeta^{t_1j}\right)\left(\sum_{t_2=0}^{p-1} a_{t_2} \zeta^{-t_2j}\right) \\
= & \sum_{j=0}^{p-1}\sum_{t_1=0}^{p-1}\sum_{t_2=0}^{p-1} a_{t_1}a_{t_2} \zeta^{j(t_1-t_2)} \\
= & \sum_{t_1=0}^{p-1}\sum_{t_2=0}^{p-1} a_{t_1}a_{t_2}\left( \sum_{j=0}^{p-1} \zeta^{j(t_1-t_2)}\right).
\end{align*}
But orthogonality gives
$$\sum_{j=0}^{p-1} \zeta^{j(t_1-t_2)}=\left\{\begin{array}{ccc}
    p & \vline & t_1=t_2 \\
    0 & \vline & t_1 \neq t_2
\end{array} \right., $$
so we have $\sum_{j=0}^{p-1} \vert A_j(z)\vert^2 = p\sum_{t=0}^{p-1} a_t(z,\eta)^2 $. Applying Cauchy--Schwarz, we have
\begin{align*}
\left(\sum_{t=1}^{p-1} 1^2\right)\cdot\left(\sum_{t=1}^{p-1} a_t(z,\eta)^2\right) & \geq \left(\sum_{t=1}^{p-1} 1\cdot a_t(z,\eta)\right)^2 \\
\iff \sum_{t=1}^{p-1} a_t(z,\eta)^2 & \geq \frac{\left(\sum_{t=1}^{p-1} a_t(z,\eta) \right)^2}{\sum_{t=1}^{p-1} 1}.
\end{align*}
Thus $\sum_{t=1}^{p-1} a_t(z,\eta)^2 \geq \frac{\left(h-a_0(z,\eta) \right)^2}{p-1}$. Since $\vert A_0 \vert^2 = \vert \sum_{n=0}^{h-1} 1 \vert^2 = h^2$, we may conclude that
\begin{align*}
\sum_{j=1}^{p-1} \vert A_j(z)\vert^2 & = -\vert A_0 \vert^2 + p\sum_{t=0}^{p-1} a_t(z,\eta)^2 \\
& = -h^2 + pa_0(z,\eta)^2 + p \sum_{t=1}^{p-1} a_t(z,\eta)^2 \\
& \geq -h^2 + pa_0(z,\eta)^2 + p \frac{(h-a_0(z,\eta))^2}{p-1} \\
& = \frac{(h-pa_0(z,\eta))^2}{p-1}
\end{align*}
\end{proof}
Lemma~\ref{L: r=1} corresponds to the cubic bound 
$$\frac{1}{2}\sum_{j=1}^2\vert A_j(z)\vert^{2} \geq \left(\frac{h-p\ell}{2}\right)^{2} $$ 
since the pointwise bounds give $\vert A_1(z)\vert, \vert A_2(z)\vert \geq (h-p\ell)/2$. From the sum over $\vert A_j(z)\vert^2$, we may directly infer information about sums over higher powers $\vert A_j(z)\vert^{2r}$.  
\begin{lemma}\label{L: average A_j} For $r\in \Z^+$, $\eta$ a $p$-th root of unity and fixed integer $z$ satisfying $\chi(z+n) \neq 0$ for all $0\leq n < h$, we have
$$\sum_{j=1}^{p-1} \vert A_j(z)\vert^{2r} \geq \frac{(h-pa_0(z,\eta))^{2r}}{(p-1)^{2r-1}}. $$
\end{lemma}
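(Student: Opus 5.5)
We obtain this directly from Lemma~\ref{L: r=1} by a power-mean (Hölder/Jensen) inequality over the $p-1$ indices $j$.

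Set $x_j := \vert A_j(z)\vert^2 \geq 0$ for $1\leq j\leq p-1$. Since $t\mapsto t^r$ is convex on $[0,\infty)$ for $r\geq 1$, Jensen's inequality for the uniform average over the $p-1$ indices gives
$$ \frac{1}{p-1}\sum_{j=1}^{p-1} x_j^{r} \geq \left(\frac{1}{p-1}\sum_{j=1}^{p-1} x_j\right)^{r}. $$
Equivalently, one may apply Hölder's inequality with exponents $r$ and $r/(r-1)$ to $\sum_j 1\cdot x_j$, which yields $\sum_j x_j \leq (p-1)^{(r-1)/r}\bigl(\sum_j x_j^r\bigr)^{1/r}$. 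Either way,
$$ \sum_{j=1}^{p-1}\vert A_j(z)\vert^{2r} \geq (p-1)^{1-r}\left(\sum_{j=1}^{p-1}\vert A_j(z)\vert^{2}\right)^{r}. $$

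Next we insert Lemma~\ref{L: r=1}. Its hypotheses are exactly those assumed here: $\eta$ is a $p$-th root of unity and $\chi(z+n)\neq 0$ for $0\leq n<h$. The lemma gives $\sum_j \vert A_j(z)\vert^2 \geq (h-pa_0(z,\eta))^2/(p-1)$. This lower bound is a nonnegative quantity (a square divided by a positive number), and $t\mapsto t^r$ is increasing on $[0,\infty)$, so raising both sides to the $r$-th power preserves the inequality. Hence
$$ \sum_{j=1}^{p-1}\vert A_j(z)\vert^{2r} \geq (p-1)^{1-r}\,\frac{(h-pa_0(z,\eta))^{2r}}{(p-1)^{r}} = \frac{(h-pa_0(z,\eta))^{2r}}{(p-1)^{2r-1}}, $$
which is the claim.

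There is no real obstacle. The only points to check are that the monotonicity step is legitimate, which holds because both sides are nonnegative even when $h<pa_0$, since the bound involves a square, and that the case $r=1$ reduces exactly to Lemma~\ref{L: r=1}.
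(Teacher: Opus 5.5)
Your proof is correct and follows the paper's argument: the paper likewise applies the power mean inequality to the $p-1$ quantities $\vert A_j(z)\vert^2$, inserts Lemma~\ref{L: r=1}, and takes $r$-th powers. The paper leaves implicit that the lower bound is nonnegative, so raising it to the $r$-th power is legitimate; you make this explicit.
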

\begin{proof}
Considering the power mean inequality and applying Lemma~\ref{L: r=1}, we have
$$ \left(\frac{1}{p-1}\sum_{j=1}^{p-1} \vert A_j(z)\vert^{2r} \right)^{1/r} \geq \frac{1}{p-1}\sum_{j=1}^{p-1} \vert A_j(z)\vert^2  \geq \left(\frac{h-pa_0(z,\eta)}{p-1}\right)^2. $$
Taking $r$-th powers and rearranging gives the desired bound.
\end{proof}
For $t,q \in \Z$ with $0\leq t<q\leq X$ and $(t,q)=1$, we define the intervals
$$ \mathcal{I}(q,t):=\left(\frac{tf}{q},\,\frac{tf+H}{q}-h+1\right]\, \quad \text{and} \quad
\mathcal{J}(q,t):=\left[\frac{tf-H}{q},\,\frac{tf}{q}-h+1\right)\,. $$
\begin{lemma} \label{L: Intervals}
Suppose $0 \leq t < q \leq X$ with $(t,q) = 1$. Then 
\begin{enumerate}
\item 
Let $0\leq n\leq h-1$.
If $z\in\mathcal{I}(q,t)$ then $q(z+n)-ft\in(0,H]$,
and if $z\in\mathcal{J}(q,t)$ then $q(z+n)-ft\in[-H,0)$.
\item
If $X\geq 2$ and $2HX \leq f$, then the collection
$$\{\mathcal{I}(q,t),\mathcal{J}(q,t): 0\leq t < q\leq X,\ (t,q)=1\}$$
is pairwise disjoint, contained in $[-H,f-H)$.
\end{enumerate}
\end{lemma}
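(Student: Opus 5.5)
Part (1) is a direct computation from the definitions. Take $z\in\mathcal{I}(q,t)$ and $0\le n\le h-1$. The lower endpoint gives $qz>tf$, so $q(z+n)-ft \ge qz-ft>0$ because $n\ge 0$ and $q>0$. For the upper bound, $z\le \frac{tf+H}{q}-h+1$ together with $n\le h-1$ gives $z+n\le \frac{tf+H}{q}$, so $q(z+n)-ft\le H$. The case $z\in\mathcal{J}(q,t)$ is symmetric. Here $z\ge \frac{tf-H}{q}$ gives $q(z+n)-ft\ge -H$, and $z<\frac{tf}{q}-h+1$ with $n\le h-1$ gives $z+n<\frac{tf}{q}$, hence $q(z+n)-ft<0$.

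For part (2), I would first establish containment in $[-H,f-H)$. Every interval in the collection lies inside the enlarged interval $\left[\frac{tf-H}{q},\frac{tf+H}{q}\right]$, so it suffices to locate these. For $t=0$ (which forces $q=1$) the enlarged interval is $[-H,H]$. On the right end it is really $(0,H-h+1]$, so in any case we need $H<f-H$, which follows from $2HX\le f$ and $X\ge 2$. For $t\ge 1$, the left end is at least $\frac{f-H}{q}\ge -H$. The right end is at most $\frac{(q-1)f+H}{q}$, and we need this strictly below $f-H$. That condition is equivalent to $H(1+q)<f$, which holds since $q+1\le 2q\le 2X$ and $2HX\le f$. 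The closed/open endpoints can be checked directly: the $\mathcal{I}$ intervals are half-open on the correct side, and the $\mathcal J(1,0)$ interval starts exactly at $-H$.

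Disjointness is the main step, and I would argue it via Farey-type separation. Each $\mathcal{I}(q,t)$ lies in $\left(\frac{tf}{q},\frac{tf+H}{q}\right]$ and each $\mathcal{J}(q,t)$ lies in $\left[\frac{tf-H}{q},\frac{tf}{q}\right)$. So both are contained in the ball of radius $H/q$ around $f\cdot\frac tq$. Moreover, $\mathcal I(q,t)$ and $\mathcal J(q,t)$ are disjoint from each other, since they sit on opposite sides of $tf/q$.

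Now take two distinct reduced fractions $t/q\ne t'/q'$ with $q,q'\le X$. Then
$$\left|\frac{tf}{q}-\frac{t'f}{q'}\right|\ge \frac{f}{qq'}.$$
The sum of the two radii is
$$\frac Hq+\frac H{q'}=\frac{H(q+q')}{qq'}\le \frac{2HX}{qq'}\le\frac{f}{qq'}.$$
So the two balls can overlap at most in a boundary point. To rule out even a shared endpoint, I would use the half-openness. Suppose the distance equals $f/(qq')$ and equality holds in the radius bound. Then the right end of the interval around the smaller centre is an $\mathcal I$ endpoint, which is shifted inward by $h-1$ (and is closed), while the left end of the other interval is $\mathcal J$'s closed endpoint. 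I expect this boundary case to be the only delicate point. If $h=1$, equality forces $q=q'=X$, and then the distinct centres differ by at least $f/X$. That is strictly more than the needed separation $2H/X$ unless $2HX=f$ exactly. In that extreme case I would check directly that the collision point is $\frac{tf+H}{q}$ for $\mathcal I$ and $\frac{t'f-H}{q'}$ for $\mathcal J$. Since $\mathcal I(q,t)$ lies in $(tf/q,\cdot]$ and $\mathcal J(q',t')$ in $[\cdot,t'f/q')$, both can contain this point only if the inequalities are all tight, and I would handle this tight case by hand.
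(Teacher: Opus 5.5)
Your route is the standard Farey-separation argument behind the result the paper cites from McGown--Trudgian; the paper gives no proof of its own. Part (1) is fine. The containment is fine apart from one slip: $q+1\le 2q\le 2X$ only gives $H(q+1)\le f$. You need strict inequality, because $\mathcal{I}(q,t)$ is closed on the right while the target interval is open at $f-H$. Use $q+1\le X+1<2X$ instead.

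\textbf{The gap is in the boundary case of disjointness.} For $h\ge 2$ your inward-shift observation does settle it. For $h=1$ your analysis goes wrong in two ways.
\begin{enumerate}
\item You claim the separation $f/X$ beats $2H/X$ ``unless $2HX=f$'', and propose to treat that case by hand. The claim is false: $f/X>2H/X$ is equivalent to $f>2H$, which always holds since $2H\le f/X\le f/2$.
\item More importantly, no half-openness argument could rescue such a case. When $h=1$, the right endpoint $(tf+H)/q$ of $\mathcal{I}(q,t)$ and the left endpoint $(t'f-H)/q'$ of $\mathcal{J}(q',t')$ are both closed. If they coincided, that point would lie in both intervals and the lemma would be false.
\end{enumerate}
So ``checking the tight case by hand'' cannot succeed. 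What you must show is that touching never occurs.

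This takes one line. Touching means $f\,|tq'-t'q|=H(q+q')$. Since $H(q+q')\le 2HX\le f$, this forces $|tq'-t'q|=1$ and $q+q'=2X$, hence $q=q'$. But for distinct reduced fractions with $q=q'$ one has $|tq'-t'q|=q|t-t'|\ge q\ge 2$, a contradiction.

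Equivalently, split into two cases:
\begin{itemize}
\item If $q\ne q'$, then $q+q'<2X$, so $H(q+q')<f\le f|tq'-t'q|$ strictly.
\item If $q=q'$, the centres are at distance at least $f/q>2H/q$.
\end{itemize}
With this, the closed balls of radius $H/q$ around distinct centres $tf/q$ are strictly separated. Your half-openness discussion then becomes unnecessary, and the argument is complete for every positive integer $h$, as the lemma requires.
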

This is a standard result on Burgess-type intervals; see for example McGown and Trudgian~\cite[Proposition 3]{McGownTrudgian2020}. For integers $a,b$, we use the notation $a \parallel b$ to signify $a$ dividing $b$ but not $\frac{b}{a}$.

\begin{prop}\label{P: S Lower bound}
Retain the notation and hypotheses of Theorem~\ref{T: criterion}. For a fixed choice of $p$-th root of unity $\omega$, assume that $\chi(N)\neq\omega$ for all integers $N\in[1,H]$ with $(N,uv)=1$. Then 
$$ S_p(f,h,r) \geq N_u(X)\frac{(h-p\lambda_h(v))^{2r}}{(p-1)^{2r-1}}, $$
where 
$$N_u(X) = \sum_{\substack{1 \leq q\leq X\\u\parallel q}} \sum_{\substack{0 \leq t < q\\(t,q)=1}} \sum_{z \in \mathcal{I}(q,t) \sqcup \mathcal{J}(q,t)}1.$$
\end{prop}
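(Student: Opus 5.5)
The plan is to restrict the sum $S_p(f,h,r)=\sum_{j=1}^{p-1}\sum_{x \Mod f}\vert A_j(x)\vert^{2r}$ to those $x$ lying in the disjoint intervals $\mathcal{I}(q,t)\sqcup\mathcal{J}(q,t)$ with $1\le q\le X$, $u\parallel q$, $0\le t<q$, $(t,q)=1$. Then, at each such point $z$, we apply Lemma~\ref{L: average A_j} with a suitable root of unity $\eta$. By Lemma~\ref{L: Intervals}(2), since $X\ge 2$ and $2HX\le f$, these intervals are pairwise disjoint and contained in $[-H,f-H)$, a set of $f$ consecutive integers. Hence they correspond to distinct residues modulo $f$. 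Since every summand is nonnegative, we get
$$S_p(f,h,r)\ \ge\ \sum_{\substack{1\le q\le X\\ u\parallel q}}\ \sum_{\substack{0\le t<q\\(t,q)=1}}\ \sum_{z\in\mathcal{I}(q,t)\sqcup\mathcal{J}(q,t)}\ \sum_{j=1}^{p-1}\vert A_j(z)\vert^{2r}.$$
It therefore suffices to show that each inner sum over $j$ is at least $(h-p\lambda_h(v))^{2r}/(p-1)^{2r-1}$.

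Fix such $q,t,z$ and set $N_n := q(z+n)-ft$ for $0\le n<h$. By Lemma~\ref{L: Intervals}(1), $N_n\in(0,H]$ when $z\in\mathcal{I}(q,t)$, and $-N_n\in(0,H]$ when $z\in\mathcal{J}(q,t)$. In particular $N_n\not\equiv 0\Mod f$, and $N_n\equiv q(z+n)\Mod f$. Since $q\le X<f$ and $f$ is prime, we have $\chi(q)\ne 0$, so $\chi(z+n)=\overline{\chi(q)}\chi(N_n)\neq 0$. This is the nonvanishing hypothesis needed in Lemma~\ref{L: average A_j}.

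Choose $\eta:=\omega\,\overline{\chi(q)}\,\overline{\chi(\pm 1)}$, where the sign matches $\mathcal{I}$ or $\mathcal{J}$. Since $p$ is odd, $\chi(-1)=1$, so the sign is harmless. Then $\chi(z+n)=\eta$ exactly when $\chi(\vert N_n\vert)=\omega$. Thus $a_0(z,\eta)$ counts the $n$ with $\chi(\vert N_n\vert)=\omega$, where $\vert N_n\vert\in[1,H]$. By hypothesis, each such $\vert N_n\vert$ must satisfy $(\vert N_n\vert,uv)>1$.

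The key step is to bound $a_0(z,\eta)\le\lambda_h(v)$, which requires ruling out the divisors of $u$ and controlling those of $v$.

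\emph{Divisors of $u$.} Suppose a prime $u_i$ divides $N_n$. Because $u_i\mid u\mid q$, this forces $u_i\mid ft$. But $u_i<h<f$ shows $u_i\neq f$, and $(t,q)=1$ shows $u_i\nmid t$. So $u_i\nmid N_n$.

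\emph{Divisors of $v$.} Suppose $v_j\mid N_n$. If $v_j\mid q$, the same argument shows $v_j\nmid N_n$ (here $v_j<f$). So assume $v_j\nmid q$. Then $q$ is invertible modulo $v_j$, and the condition $v_j\mid q(z+n)-ft$ pins down $n$ to a single residue class modulo $v_j$. Among $h$ consecutive values of $n$, at most $\lceil h/v_j\rceil$ lie in that class.

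Summing over $j$ gives $a_0(z,\eta)\le\sum_j\lceil h/v_j\rceil=\lambda_h(v)$. The hypothesis $p\lambda_h(v)<h$ then gives $h-pa_0(z,\eta)\ge h-p\lambda_h(v)>0$, so we may raise this inequality to the power $2r$. Lemma~\ref{L: average A_j} now yields the desired bound at $z$. Summing over all such $z$ produces the factor $N_u(X)$, which finishes the proof.

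I expect the main obstacle to be this counting step. It needs care about which primes of $uv$ can divide $N_n$, and here the condition $u\parallel q$ and the coprimality $(u,v)=1$ enter. The remaining steps are bookkeeping.
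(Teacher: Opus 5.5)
Your proposal is correct and follows the paper's proof essentially step for step. You restrict to the disjoint Burgess intervals and take $\eta=\chi(q)^{-1}\omega$, using $\chi(-1)=1$ for $\mathcal{J}$. You then bound $a_0(z,\eta)\le\lambda_h(v)$ by excluding the $u_i$ (via $u\mid q$, $(t,q)=1$ and $f$ prime) and counting one residue class modulo each $v_j\nmid q$, and finish with Lemma~\ref{L: average A_j}. Two small remarks: your step $h<f$ should be justified, as the paper does, by $h= H/X\le H/2<f$; and only $u\mid q$ is actually used, not the full condition $u\parallel q$, and $(u,v)=1$ is not needed for this count.
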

\begin{proof}
Fix integers $0\leq t< q \leq X$ satisfying $(t,q)=1$ and $u\parallel q$. We consider only $z\in\mathcal{I}(q,t)$; for $z\in\mathcal{J}(q,t)$ the same argument applies with $q(z+n)-tf$ replaced by its absolute value, which does not change the character value since $\chi(-1)=1$. This follows from the fact that $\chi(-1)^2 = \chi(-1)^p = 1$ and the primality of $p$. By Lemma \ref{L: Intervals}, if $0\leq n\leq h-1$, then one has $q(z+n)-tf \in (0,H]$. If additionally, $(q(z+n)-tf, uv) = 1$, then $\chi(q(z+n)-tf) \neq \omega$ by assumption. Let us show that $(q(z+n)-tf, uv) \neq 1$ for at most $\lambda_h(v)$ choices of $n$.

If $u_i\mid q(z+n)-tf$ for some $i$, then, since $u_i\mid u\mid q$, we get $u_i\mid tf$; as $(t,q)=1$ we have $u_i\nmid t$, and $u_i\nmid f$ since $f$ is prime and $u_i<h\leq H/2<f$. Hence $(q(z+n)-tf,u)=1$. Thus, it is sufficient to show that for every $1 \leq j \leq \ell$, $v_{j}\mid q(z+n)-tf$ for at most $\lceil h/v_j \rceil$ choices of $0\leq n <h$.

If $v_j\mid q$, then the same argument as above for $u_i$ shows that $v_j\nmid q(z+n)-tf$ for all choices of $n$. If instead $v_j\nmid q$, consider two values $0\leq n_1,n_2\leq h-1$ such that $v_j \mid q(z+n_1)-tf,q(z+n_2)-tf$. Then $v_j \mid q(n_1-n_2)$ and, by assumption, $n_1 \equiv n_2 \Mod{v_j}$. Since every residue class modulo $v_j$ occurs at most $\lceil h/v_j \rceil$ times amongst $0\leq n \leq h-1$, we have $v_j \mid q(z+n)-tf$ for at most $\lceil h/v_j \rceil$ choices of $n$. By considering all prime divisors $v_j \mid v$, we obtain
$$ \#\left\{0\leq n \leq h-1: (v,q(z+n)-tf) \neq 1\right\} \leq \sum_{j=1}^{\ell} \left\lceil\frac{h}{v_j}\right\rceil = \lambda_h(v).$$

Since $q(z+n)-tf\equiv q(z+n)\pmod{f}$ and $0<q\leq X<f$, we have $\chi(q(z+n)-tf)=\chi(q)\chi(z+n)$, where $\chi(q)$ is a $p$-th root of unity. 
The argument above thus implies that $\chi(z+n)$, $0 \leq n \leq h-1$, coincides with the $p$-th root of unity $\omega_q:=\chi(q)^{-1}\omega$ for at most $\lambda_h(v)$ values of $n$. This is equivalent to $a_0(z,\omega_q)\leq \lambda_h(v)$. Moreover, $\chi(z+n) \neq 0$: writing $q(z+n)=tf+N$ with $0<|N|\leq H<f$, we see that $f\mid z+n$ would force $f\mid N$. Since $X\geq 2$ and $2HX\leq f$, Lemma~\ref{L: Intervals} shows that the intervals $\mathcal{I}(q,t)$, $\mathcal{J}(q,t)$ with $0\leq t<q\leq X$, $(t,q)=1$, are pairwise disjoint subsets of $[-H,f-H)$. The integers $z$ appearing below are therefore pairwise distinct and lie in an interval of length $f$, so by the periodicity of $\chi^j$, each of them contributes at most one term to $S_j(f,h,r)$. Then
$$S_p(f,h,r) = \sum_{j = 1}^{p-1}\sum_{x\Mod{f}}\vert A_j(x)\vert^{2r} \geq \sum_{\substack{1 \leq q\leq X\\u\parallel q}} \sum_{\substack{0 \leq t < q\\(t,q)=1}} \sum_{z \in \mathcal{I}(q,t) \sqcup \mathcal{J}(q,t)} \sum_{j = 1}^{p-1}\vert A_j(z)\vert^{2r}. $$
Since $pa_0(z,\omega_q) \leq p\lambda_h(v) < h$ by assumption, we have $h-pa_0(z,\omega_q) \geq h-p\lambda_h(v)$. Applying Lemma~\ref{L: average A_j}, we obtain
\begin{align*}
S_p(f,h,r) & \geq \sum_{\substack{1 \leq q\leq X\\u\parallel q}} \sum_{\substack{0 \leq t < q\\(t,q)=1}} \sum_{z \in \mathcal{I}(q,t) \sqcup \mathcal{J}(q,t)} \sum_{j = 1}^{p-1}\vert A_j(z)\vert^{2r} \\
& \geq \sum_{\substack{1 \leq q\leq X\\u\parallel q}} \sum_{\substack{0 \leq t < q\\(t,q)=1}} \sum_{z \in \mathcal{I}(q,t) \sqcup \mathcal{J}(q,t)} \frac{(h-p\lambda_h(v))^{2r}}{(p-1)^{2r-1}} \\
& = N_u(X)\frac{(h-p\lambda_h(v))^{2r}}{(p-1)^{2r-1}},
\end{align*}
as required.
\end{proof}
Via a lattice-point counting argument, one obtains a lower bound for the quantity $N_u(X)$. For a proof of this fact, see for example~\cite[Lemma 3.7]{BaggerNorm2026}.
\begin{lemma}\label{L: N_u}
\begin{equation} \label{Eq: N_u}
N_u(X)\geq E_u(X)\frac{6}{\pi^2}\frac{1}{u}\frac{\phi(u)}{\sigma(u)}X^2h.
\end{equation}
\end{lemma}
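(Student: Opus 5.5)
The plan is a direct lattice-point count. Each interval $\mathcal{I}(q,t)$ is half-open of length $\frac{H}{q}-h+1$, so it contains at least $\frac{H}{q}-h = h\left(\frac{X}{q}-1\right)$ integers. The same holds for $\mathcal{J}(q,t)$. For $q\le X$ this quantity is nonnegative, so
$$ N_u(X) \ge 2h\sum_{\substack{1\le q\le X\\ u\parallel q}} \phi(q)\left(\frac{X}{q}-1\right). $$

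Next I would discard terms to get a multiplicative structure. For $u>1$, keep only $q=um$ with $m\le Y:=X/u$ and $(m,u)=1$. Such $q$ satisfy $u\parallel q$, and since $u$ is squarefree, $\phi(um)=\phi(u)\phi(m)$. This gives
$$ N_u(X)\ge 2h\phi(u)\left( Y\sum_{\substack{m\le Y\\(m,u)=1}}\frac{\phi(m)}{m} \;-\; \sum_{\substack{m\le Y\\(m,u)=1}}\phi(m)\right). $$
For $u=1$ the condition is vacuous and one sums over all $q\le X$; this is the source of the modified term $2$ in $E_1$.

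The two sums are then estimated explicitly via $\phi(m)/m=\sum_{d\mid m}\mu(d)/d$, where each $d$ must itself be coprime to $u$. One writes
$$ \sum_{\substack{m\le Y\\(m,u)=1}}\frac{\phi(m)}{m}=\sum_{\substack{d\le Y\\(d,u)=1}}\frac{\mu(d)}{d}\,\#\{k\le Y/d:(k,u)=1\}. $$
The inner count is $\frac{\phi(u)}{u}\frac{Y}{d}$ up to an error of at most $2^{\omega(u)}$, or crudely a $\sigma(u)$-type quantity. The completed Euler product gives $\sum_{(d,u)=1}\mu(d)/d^2=\frac{6}{\pi^2}\prod_{q\mid u}(1-q^{-2})^{-1}$. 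Combined with the factor $\phi(u)/u$, this yields the main density $\frac{6}{\pi^2}\frac{u}{\sigma(u)}$ for squarefree $u$. Similarly $\sum\phi(m)\le \frac{1}{2}\cdot\frac{6}{\pi^2}\frac{u}{\sigma(u)}Y^2+{}$error, using a lower bound for the first sum and an upper bound for the second.

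The main terms combine as
$$ 2h\phi(u)\cdot\frac{6}{\pi^2}\frac{u}{\sigma(u)}\left(Y^2-\tfrac12 Y^2\right)=\frac{6}{\pi^2}\frac{\phi(u)}{u\,\sigma(u)}X^2h, $$
which is exactly the claimed leading term. All error terms (tail of the $d$-sum bounded by $\sum 1/d^2\le\pi^2/6$, the coprimality-count errors, and the boundary losses in the floor functions) are of relative size $O(\sigma(u)/X)$. These I would collect into the factor $E_u(X)$.

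I expect the main obstacle to be this explicit bookkeeping. Each error source must be bounded with constants matching $\frac{\pi^2}{6}\sigma(u)\left(\frac{\sigma(u)}{4}+\frac{\phi(u)}{u}+\frac{\phi(u)}{X}\right)\frac1X$, for instance the $\sigma(u)^2/4$ term arising from the quadratic sum $\sum\phi(m)$. I would first do the $u=1$ case with classical explicit bounds for $\sum_{m\le Y}\phi(m)$ and $\sum_{m\le Y}\phi(m)/m$, then insert the coprimality condition by Möbius inversion over divisors of $u$, which is where the $\sigma(u)$ factors enter.
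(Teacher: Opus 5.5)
Your reduction is sound: each of $\mathcal{I}(q,t)$ and $\mathcal{J}(q,t)$ contains at least $H/q-h$ integers. Keeping only $q=um$ with $(m,u)=1$ then gives $N_u(X)\ge 2h\phi(u)\sum_{m\le Y,\,(m,u)=1}\frac{\phi(m)}{m}(Y-m)$ with $Y=X/u$, whose main term is the claimed one.

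\textbf{The gap is in the error analysis, which is the whole content of the lemma.} The factor $E_u(X)$ contains no logarithm, and your plan cannot produce that. Suppose you bound $Y\sum\phi(m)/m$ from below and $\sum\phi(m)$ from above separately. Your own inner-count error is at most $2^k$, where $k$ is the number of prime factors of $u$. Weighted by $1/d$ over $d\le Y$ and multiplied by $Y$, it costs up to $2^kY\sum_{d\le Y}d^{-1}\asymp 2^kY\log Y$. That is a relative loss of order $2^k\sigma(u)\log(X/u)/X$, not $O(\sigma(u)/X)$, and the second sum has the same defect.

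This is not crude bookkeeping. Already for $u=1$ no bound $\sum_{m\le Y}\phi(m)\le \frac{3}{\pi^2}Y^2+O(Y)$ holds, since that error term is $\Omega_{\pm}(Y\sqrt{\log\log Y})$ (Montgomery). The fluctuations cancel only in the combination $Y\sum\phi(m)/m-\sum\phi(m)$, so splitting it in opposite directions can never reach $E_u(X)$. Separately, the tail $\sum_{d>Y}\mu(d)d^{-2}$ must be bounded by $Y^{-1}+Y^{-2}$; bounding it by $\pi^2/6$ is an $O(1)$ relative loss.

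\textbf{The missing idea} is to keep the weight $Y-m$, which vanishes at the endpoint, attached during the M\"obius inversion:
$$\sum_{\substack{m\le Y\\(m,u)=1}}\frac{\phi(m)}{m}(Y-m)=\sum_{\substack{d\le Y\\(d,u)=1}}\mu(d)\,T(Y/d),\qquad T(Z):=\sum_{\substack{k\le Z\\(k,u)=1}}(Z-k).$$
Detect $(k,u)=1$ by $\sum_{e\mid (k,u)}\mu(e)$ and use the exact identity $\sum_{j\le W}(W-j)=\frac{W^2-W}{2}+\frac{\{W\}(1-\{W\})}{2}$, whose last term lies in $[0,1/8]$. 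This gives
$$T(Z)=\frac{\phi(u)}{u}\frac{Z^2}{2}-\frac{Z}{2}\sum_{e\mid u}\mu(e)+\theta\,\frac{\sigma(u)}{8},\qquad |\theta|\le 1,$$
an error independent of $Z$.

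For $u>1$ the linear term vanishes. For $u=1$ it contributes $-\frac{X}{2}\sum_{d\le X}\mu(d)/d$, which is at most $X/2$ in absolute value because $|\sum_{d\le x}\mu(d)/d|\le 1$. This non-vanishing of $\sum_{d\mid u}\mu(d)$, which the paper mentions, is the origin of the $2$ in $E_1(X)$. It does not come from the vacuity of $u\parallel q$.

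The remaining constants then come out exactly as in $E_u(X)$ once you set $Y=X/u$:
\begin{itemize}
\item The $\theta$-terms, summed over at most $Y$ values of $d$, give the $\sigma(u)/4$ term.
\item The tail bound, applied to $\frac{\phi(u)}{2u}Y^2\sum_{(d,u)=1}\mu(d)d^{-2}=\frac{3}{\pi^2}\frac{u}{\sigma(u)}Y^2$, gives the $\frac{\phi(u)}{u}+\frac{\phi(u)}{X}$ terms.
\end{itemize}
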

\subsection{A Weil-type bound for higher moments} Define the quantity 
$$W'(f,h,r) := 2r-1 + c_p(h,r)f^{1/2}h^{-2r},$$
where
$$ c_p(h,r) := \sum_{d=0}^{\left\lfloor\frac{r}{p}\right\rfloor}\left(\frac{r!}{d!(p!)^d}\right)^2\frac{h^{r-(p-2)d}}{(r-pd)!}.$$
\begin{prop}\label{P: S Upper bound}
Retain the notation and hypotheses of Theorem~\ref{T: criterion}. Then
$$ S_p(f,h,r) \leq (p-1)f^{1/2}h^{2r}W'(f,h,r). $$
\end{prop}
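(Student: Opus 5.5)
We bound each conjugate sum $S_j(f,h,r)$ separately and show
$$S_j(f,h,r)\le f^{1/2}h^{2r}W'(f,h,r)=(2r-1)f^{1/2}h^{2r}+c_p(h,r)f$$
for every $1\le j<p$. Summing over the $p-1$ conjugates then gives the proposition. Since $\chi^j$ is again a character of exact order $p$ modulo the prime $f$, it suffices to treat $\chi$ itself.

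\textbf{Expanding the moment.} Write $|A(x)|^{2r}=A(x)^r\overline{A(x)}^r$ and expand. This gives
$$S_1=\sum_{\vec n,\vec m\in[0,h)^r}\ \sum_{x \Mod f}\chi\Big(\prod_{i=1}^r(x+n_i)\Big)\overline{\chi}\Big(\prod_{i=1}^r(x+m_i)\Big).$$
Since $\overline\chi=\chi^{p-1}$, the inner sum is $\sum_x\chi(F(x))$ with
$$F(x)=\prod_i(x+n_i)\prod_i(x+m_i)^{p-1}.$$
Let $e_a$ be the multiplicity of the root $-a$ in $F$, so that $e_a=\#\{i:n_i=a\}-\#\{i:m_i=a\}\pmod p$.

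\textbf{Splitting into two kinds of tuples.} Call $(\vec n,\vec m)$ \emph{bad} if every $e_a\equiv 0 \pmod p$; in that case $F$ is a $p$-th power times a unit and we use the trivial bound $f$. Otherwise $F$ is not a constant multiple of a $p$-th power, and Weil's bound gives
$$\Big|\sum_x\chi(F(x))\Big|\le (D-1)f^{1/2},$$
where $D\le 2r$ is the number of distinct roots of $F$. This yields $(2r-1)f^{1/2}$ per good tuple. There are at most $h^{2r}$ good tuples, giving the term $(2r-1)f^{1/2}h^{2r}$.

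\textbf{Counting bad tuples.} The remaining step is to show that the number of bad tuples is at most $c_p(h,r)$. A tuple is bad exactly when, for each value $a$, the multiplicity of $a$ in $\vec n$ and in $\vec m$ agree modulo $p$. Since $h$ is small relative to $f$, these are precisely the coincidences allowed. I will overcount as follows.

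1. Choose $d$, the number of blocks of $p$ equal entries that appear in $\vec n$ beyond a matching, with $0\le d\le\lfloor r/p\rfloor$.
2. Pair the remaining $r-pd$ entries of $\vec n$ with entries of $\vec m$ by a bijection. The entries of $\vec n$ are then determined up to $h^{r-pd}$ choices of values and $h^d$ block values.
3. Do the same for $\vec m$, which has the same $d$ by counting modulo $p$.

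The number of ways to place the blocks and the matched part in $\vec n$ is $\frac{r!}{d!(p!)^d(r-pd)!}$, and similarly for $\vec m$. The value choices number $h^{r-pd}\cdot h^{2d}$, and the bijection contributes $(r-pd)!$. Multiplying these gives
$$\Big(\frac{r!}{d!(p!)^d}\Big)^2\frac{h^{r-pd+2d}}{(r-pd)!},$$
which is exactly the summand of $c_p(h,r)$, since $r-pd+2d=r-(p-2)d$.

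\textbf{Main obstacle.} The hard step is this combinatorial count. A general bad configuration is a multiset matching in which the multiplicities in $\vec n$ and $\vec m$ are congruent modulo $p$, not merely equal or differing by single $p$-blocks. I would first reduce to that case by greedily pairing equal values between $\vec n$ and $\vec m$ and then grouping the unpaired leftovers into $p$-blocks. The point to check is that this decomposition is surjective onto bad tuples, so that the displayed formula is an overcount (possibly a crude one), which is all the upper bound needs.
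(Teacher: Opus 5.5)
Your proposal is correct, and its outer structure matches the paper's. You bound each $S_j$ by $f^{1/2}h^{2r}W'(f,h,r)$ using only that $\chi^j$ has order $p$, then sum over $1\le j<p$. The difference is in how the per-conjugate bound is obtained: the paper cites Treviño's Remark 2.1, whereas you reprove it. You expand the $2r$-th moment, apply Weil's bound (at most $2r$ distinct roots) to the tuples where $F$ is not a $p$-th power, and bound the number of remaining tuples by $c_p(h,r)$. This is essentially the argument behind the cited result, and your count of $\bigl(\frac{r!}{d!(p!)^d}\bigr)^2\frac{h^{r-(p-2)d}}{(r-pd)!}$ per $d$ is right.

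The surjectivity you flag as the ``main obstacle'' does hold, and it is immediate. Let $\alpha_a,\beta_a$ be the multiplicities of $a$ in $\vec n,\vec m$, and pair $\min(\alpha_a,\beta_a)$ copies. The $|\alpha_a-\beta_a|$ leftover copies lie on one side only and number a multiple of $p$. Both sides then have exactly $r-\sum_a\min(\alpha_a,\beta_a)$ leftover entries, so the block count $d$ agrees exactly, not merely modulo $p$. Hence every bad tuple is realised and your formula is a valid overcount.

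Your version buys self-containedness and makes visible where the retained hypotheses enter:
\begin{itemize}
\item Distinctness of the roots $-a$ modulo $f$ uses $h\le H/2<f$.
\item Under $r<p$, the congruence $\alpha_a\equiv\beta_a \pmod p$ with $0\le\alpha_a,\beta_a\le r$ forces equality. So only $d=0$ survives, and the bad tuples are just rearrangements, at most $r!h^r$ of them. This is exactly why $W'=W$ in the lemma that follows.
\end{itemize}
The paper's citation is simply shorter.
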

\begin{proof}
Since $p$ is prime, $\chi^j$ is also a character of order $p$ modulo $f$. Thus, independently of $j$,~\cite[Remark 2.1]{Trevino2015} gives the bound 
$$ S_j(f,h,r) \leq f^{1/2}h^{2r}W'(f,h,r). $$
Summing over all $1\leq j < p$ gives the desired bound.
\end{proof}
For $r<p$, the sum $c_p(h,r)$ collapses and we obtain a much simpler expression. 
\begin{lemma}
For any $1\leq r < p$, we have
$$W'(f,h,r) = W(f,h,r). $$
\end{lemma}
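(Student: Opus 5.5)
The identity is a direct evaluation of $c_p(h,r)$ followed by substitution into the definition of $W'$. Since $1 \le r < p$, we have $\lfloor r/p \rfloor = 0$, so the sum defining $c_p(h,r)$ has the single term $d=0$. In that term $d! = 1$, $(p!)^0 = 1$, the exponent of $h$ is $r-(p-2)\cdot 0 = r$, and the factorial in the denominator is $(r-p\cdot 0)! = r!$. Hence
$$ c_p(h,r) = \left(\frac{r!}{1}\right)^2\frac{h^{r}}{r!} = r!\,h^{r}. $$

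Substituting into the definition of $W'$ gives
$$ W'(f,h,r) = 2r-1 + r!\,h^{r} f^{1/2} h^{-2r} = 2r-1 + r!\,f^{1/2}h^{-r}, $$
which is exactly $W(f,h,r)$ as defined before Theorem~\ref{T: criterion}.

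There is no genuine obstacle. The only points to check are that the range of summation really reduces to $d=0$, which uses $r<p$ and nothing more, and that the powers of $h$ combine as $h^{r}\cdot h^{-2r}=h^{-r}$.
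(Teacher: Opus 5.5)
Your proposal is correct and follows essentially the same approach as the paper. Both note that $r<p$ gives $\lfloor r/p\rfloor=0$, so $c_p(h,r)$ reduces to its $d=0$ term $r!\,h^r$, and substituting this into $W'(f,h,r)$ gives $2r-1+r!\,f^{1/2}h^{-r}=W(f,h,r)$.
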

\begin{proof}
Since $r<p$, we have $\lfloor \frac{r}{p}\rfloor= 0$ so $$ c_p(h,r) = \left(\frac{r!}{0!(p!)^0}\right)^2\frac{h^{r-(p-2)0}}{(r-p0)!} = (r!)^2 \frac{h^r}{r!}=r!h^r. $$
We conclude
$$W'(f,h,r) = 2r-1 + \frac{r!h^rf^{1/2}}{h^{2r}}=2r-1 + \frac{r!f^{1/2}}{h^r},$$
as required.
\end{proof}
We are now in a position to prove our main result.
\begin{proof}[Proof of Theorem~\ref{T: criterion}]
Assume, for a contradiction, that no such value $m\leq H$ exists. Then, by combining the bounds for $S_p(f,h,r)$ in Propositions~\ref{P: S Lower bound} and~\ref{P: S Upper bound} with the bound~\eqref{Eq: N_u}, we have the inequality
$$ E_u(X)\frac{6}{\pi^2}\frac{1}{u}\frac{\phi(u)}{\sigma(u)}X^2h\frac{(h-p\lambda_h(v))^{2r}}{(p-1)^{2r-1}} \leq S_p(f,h,r) \leq (p-1)f^{1/2}h^{2r}W(f,h,r). $$
By assumption, $E_u(X)>0$ and $h>p\lambda_h(v)$ so the lower bound on $S_p(f,h,r)$ is positive. Rearranging and recalling $X = H/h$, we have
$$ 1 \leq \frac{1}{E_u(X)}\frac{\pi^2}{6}\frac{\sigma(u)}{\phi(u)}uhf^{1/2}\left(\frac{(p-1)h}{h-p \lambda_h(v)}\right)^{2r}\frac{W(f,h,r)}{H^2}. $$
This violates the criterion~\eqref{Eq: main-criterion}, and we conclude there exists a positive integer $m \leq H$ satisfying $(m,uv)=1$ and $\chi(m)=\omega$.
\end{proof}
The twisted version of $E_u(X)$ whenever $u=1$ comes from the non-vanishing of the sum $\sum_{d\mid u}\mu(d)$, explored in more detail following Lemma 3.6 in~\cite{BaggerNorm2026}.

\section{Conductor bounds} \label{S: conductors}
For convenience, we will assume throughout that $f\geq 10^{40}$. In this case, we have 
$$q_1 \leq 1.244 f^{1/4}L \quad \text{and} \quad q_2 \leq 1.670f^{1/4}L^{3/2}.$$
These explicit bounds are given in~\cite[Corollary 2]{MaMcGownRhodesWanner} and can be reduced slightly if one is willing to strengthen the assumption on the size of $f$. If we instead bound $q_1q_2$, we have the following.
\begin{prop} \label{P: q1q2}
Suppose $f\geq 10^{40}$ and $\chi$ has odd order. Then
$$ q_1q_2 < 2.581 f^{1/2}L^2. $$ 
\end{prop}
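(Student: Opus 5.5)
The naive product of the two displayed bounds gives $q_1q_2\le 2.078\,f^{1/2}L^{5/2}$, which is too weak by a factor $L^{1/2}$. So the plan is not to multiply worst cases. Instead I would go back into the proof of the $q_2$ bound in~\cite{MaMcGownRhodesWanner} and make its dependence on $q_1$ explicit. The extra $L^{1/2}$ in $q_2\le 1.670f^{1/4}L^{3/2}$ comes from having to discard integers divisible by $q_1$. The expected refined bound has roughly the shape
$$ q_2 \le c\,f^{1/4}L\Bigl(1+\frac{L}{q_1}\Bigr)^{1/2}, $$
with $c$ close to the constant in the $q_1$ bound. This shape is what I expect the argument to give, not something I have checked. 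The point is that when $q_1$ is large, multiples of $q_1$ are rare and cost almost nothing, and when $q_1$ is small, $q_1q_2$ is small anyway.

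The key steps, in order, are as follows. First, I rerun the Burgess-type argument that bounds the least prime non-residue. Take $x$ slightly smaller than $q_2$. Then every $n\le x$ with $q_1\nmid n$ is a product of $p$-th power residues, so $\chi(n)=1$. The number of $n\le x$ with $\chi(n)\ne 1$ is therefore at most about $x/q_1$, plus primes in $[q_2,x]$, which contribute nothing by the choice of $x$. Second, I feed this into the explicit Burgess bound with $r=2$ and the appropriate $h$ (the same input used in~\cite{MaMcGownRhodesWanner}), in the same way as the proof of Theorem~\ref{T: criterion}. The lower bound on $\bigl|\sum_n \chi(n)\bigr|$ over short intervals then carries a factor $(1-c'/q_1)$ in place of the crude loss. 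This yields a bound on $q_2$ of the displayed form. Third, I split into cases. If $q_1\le L$, I use $q_1q_2\le L\cdot 1.670f^{1/4}L^{3/2}$, which is far below $2.581f^{1/2}L^2$ since $f\ge 10^{40}$. If $q_1>L$, the refined bound gives $q_2\le \sqrt2\,c\,f^{1/4}L$, and then $q_1q_2\le 1.244\sqrt2\,c\,f^{1/2}L^2$. The $\sqrt2$ should be improvable by tuning the cut-off to a multiple of $L$. The resulting constant is then compared with $2.581$.

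The odd order hypothesis enters because $\chi(-1)=1$, exactly as in Proposition~\ref{P: S Lower bound}. This lets the intervals $\mathcal{J}(q,t)$ be used as well, doubling the count relative to the quadratic setting.

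The main obstacle is the second step: obtaining the explicit $q_1$-dependence with constants good enough that the two cases combine to exactly $2.581$. I would first try the simplest route, taking the explicit inequality underlying~\cite[Corollary 2]{MaMcGownRhodesWanner} and substituting the density $1-1/q_1$ for the cruder estimate used there. Only if that fails to reach the constant would I re-optimise $h$ and the threshold between the two cases numerically.
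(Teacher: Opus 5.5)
The paper does not prove this from scratch. It quotes Corollary~7.5 of \cite{LezowskiMcGown2017}, with Proposition~7.3 of the same paper supplying the numerical constant for $f\ge 10^{40}$. Your plan tries to rederive such a result. Its architecture is reasonable: make the $q_2$ bound depend on $q_1$, and dispose of small $q_1$ trivially. Your case $q_1\le L$ is correct. But the case that matters rests entirely on the bound $q_2\le c\,f^{1/4}L(1+L/q_1)^{1/2}$ with $c$ near $1.244$. You explicitly have not established this, and the whole content of the statement is the explicit constant $2.581$. So as written this is not a proof.

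Moreover, the mechanism you describe for that step would not produce it.
\begin{itemize}
\item With Burgess moment $r=2$, the inequality $N(X)h^{4}\lesssim S_1\le 3f^{1/2}h^{4}+2fh^{2}$ only forces a non-residue below a constant times $f^{3/8}$. The scale $f^{1/4}L$ requires $r\asymp L$ and $h\asymp L$. For small $p$ this means $r\ge p$, so the simplified $W$ of Theorem~\ref{T: criterion} is unavailable and the general moment bound $W'$ must be used.
\item In that regime, your pointwise device is the bound $\vert A(z)\vert\ge h-2\lceil h/q_1\rceil$ for each window. It inflates the admissible $H$ by the factor $\bigl(1-2\lceil h/q_1\rceil/h\bigr)^{-r}$.
\item For $q_1<h$ this factor is at least $e^{2r/q_1}$. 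That is exponential in $L/q_1$, not $(1+L/q_1)^{1/2}$.
\item For every $q_1\ge h$ the factor equals $(1-2/h)^{-r}\approx e^{2r/h}$. This does not tend to $1$ as $q_1$ grows, and since $h\asymp r$ it stays bounded away from $1$.
\end{itemize}
So neither ``$c$ close to $1.244$'' nor the factor $\sqrt2$ at $q_1=L$ follows.

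To get a loss that tends to $1$ you would need a different device. One option is to discard the windows containing a multiple of $q_1$, which form a proportion of roughly $h/q_1$. You would then still have to carry out the explicit optimisation of $r$, $h$ and the cut-off, to check that the constant lands below $2.581$ for all $f\ge 10^{40}$. That numerical work is precisely what the cited result supplies.

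Incidentally, the intervals $\mathcal{J}(q,t)$ do not require $\chi(-1)=1$, since only $\vert A(z)\vert$ enters the lower bound.
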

\begin{proof}
This is effectively~\cite[Corollary 7.5]{LezowskiMcGown2017} followed by applying Proposition 7.3 from the same paper when $f\geq 10^{40}$.
\end{proof}    
Theorem~\ref{T: main} will follow from the auxiliary result below.
\begin{theorem} \label{T: main-ugly}
Consider a cyclic field $K$ of degree $p>3$ and fix an integer $2 \leq r< p$. Set $h = \left\lceil f^{1/{2r}}\right\rceil$, $H = f^{1/2}/(2.581L^2)$, $X = H/h$ and
$$C_p(f,h,r) = 2.581\left(\frac{1}{E_1(X)}\frac{\pi^2}{6}h\frac{W(f,h,r)}{f^{1/2r}}\right)^{1/2}\left(\frac{h}{h-2p}\right)^r. $$
If $f\geq 10^{40}$, $2p<h$ and
\begin{equation} \label{Eq: main-ugly}
C_p(f,h,r)(p-1)^rL^{2}  < f^{\frac{r-1}{4r}},
\end{equation}
then $K$ is not norm-Euclidean.
\end{theorem}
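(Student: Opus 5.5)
The plan is to apply Theorem~\ref{T: criterion} with $\omega=\chi(q_2)^{-1}$ and $H=f^{1/2}/(2.581L^2)$. This produces an $m\le H$ with $(m,q_1q_2)=1$ and $\chi(m)=\chi(q_2)^{-1}$. By Proposition~\ref{P: q1q2} we then have $q_1q_2m<2.581f^{1/2}L^2\cdot H=f$, and one of Propositions~\ref{P: Heilbronn} or~\ref{P: Lemma 5.1} should finish the argument. I would split into two cases, according to whether the least non-residues are larger than $h=\lceil f^{1/2r}\rceil$ (the generic case) or not (small least non-residues).

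\emph{Generic case, $q_1,q_2\ge h$.} Take $u=1$ and $v=q_1q_2$. Then $\lambda_h(v)=\lceil h/q_1\rceil+\lceil h/q_2\rceil=2$, so the hypothesis $p\lambda_h(v)<h$ is exactly $2p<h$. The remaining hypotheses are routine checks for $f\ge 10^{40}$:
\begin{itemize}
\item $X=H/h\ge 2$;
\item $2HX=2H^2/h\le f$, which holds since $H^2<f$;
\item $E_1(X)>0$;
\item $v_j<f$.
\end{itemize}
With $u=1$ (so $\sigma(u)=\phi(u)=1$), the left side of~\eqref{Eq: main-criterion} becomes
$$\frac{1}{E_1(X)}\frac{\pi^2}{6}\,h f^{1/2}\Big(\frac{(p-1)h}{h-2p}\Big)^{2r}W(f,h,r)\,\frac{2.581^2L^4}{f} = C_p(f,h,r)^2(p-1)^{2r}L^4 f^{1/2r-1/2}.$$
So~\eqref{Eq: main-criterion} is equivalent to~\eqref{Eq: main-ugly}, and we obtain $m$ with $q_1q_2m<f$.

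\emph{Concluding non-norm-Euclideanity.} If $q_1\ne 2,3,7$ and $q_1\ge \frac{2.1}{p}f^{1/p}L$, Proposition~\ref{P: Lemma 5.1} applies directly, since $\max\{\cdot\}q_2m=q_1q_2m\le f$. If instead $q_1<\frac{2.1}{p}f^{1/p}L$, the relevant quantity is $\frac{2.1}{p}f^{1/p}L\cdot q_2m$. Using $q_2\le 1.670f^{1/4}L^{3/2}$ and $m\le H$, this is $O(f^{3/4+1/p}L^{1/2})$, which is $<f$ because $p\ge5$ gives $3/4+1/p<1$ with room to spare at $f\ge10^{40}$. The same room handles the exceptional values of $q_1$:
\begin{itemize}
\item $q_1=7$ via condition (2) of Proposition~\ref{P: Heilbronn}, where $3q_1\ln q_1$ is a constant;
\item $q_1=2$ or $3$ via conditions (4) and (5);
\item the degenerate pairs $(q_1,q_2)=(2,3),(3,5)$ via condition (1).
\end{itemize}
Condition (1) requires the congruence $mq_2k\not\equiv f\pmod{q_1^2}$ for $1\le k<q_1$. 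I expect this to be the main obstacle, and would try to enforce it by adding the appropriate prime to $u$ or $v$ (or by shrinking $H$, since $q_1q_2$ is then bounded), at the cost of a worse constant which the slack absorbs.

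\emph{Small non-residue case, $q_1<h$ (or $q_2<h$).} Here $\lambda_h(q_1q_2)$ could be large, so I would instead move the small primes into $u$. This is allowed because $u_i<h$ is required. Concretely, take $u=q_1,v=q_2$ when $q_1<h\le q_2$, and $u=q_1q_2,v=1$ when both are below $h$; then $\lambda_h(v)\le 1$. The factor $\sigma(u)u/\phi(u)$ in~\eqref{Eq: main-criterion} grows, but now $q_1q_2$ is small, so we may take $H$ as large as roughly $f/(q_1q_2\cdot\text{const})$ rather than $f^{1/2}/L^2$. I would then check that the criterion holds comfortably under~\eqref{Eq: main-ugly} and that $2HX\le f$ and $X/u\ge1$ still hold. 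The resulting $m$ again satisfies the size conditions of Propositions~\ref{P: Heilbronn} and~\ref{P: Lemma 5.1} as above. Balancing $H$ against $u$ in this case is the secondary difficulty.
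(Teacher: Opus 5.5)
Your generic case ($q_1,q_2\ge h$, $u=1$, $v=q_1q_2$, $\lambda_h(v)=2$, $H=H_C$) is correct and is the paper's Case Ia. In that case $q_1\ge h>2p\ge 10$, so $q_1\in\{2,3,7\}$ only arises in the small case.

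There are two genuine gaps, both in the non-generic cases. The first is that $h$ is the wrong threshold for moving a non-residue into $u$. With $u=q_1$, the left side of \eqref{Eq: main-criterion} carries the factor $u\sigma(u)/\phi(u)\approx q_1$. Meanwhile $H$ is capped by $f/(q_1q_2)$, since every available final condition needs $q_1q_2m\lesssim f$ once $q_1>3$, and by $\sqrt{fh/2}$, which is required for $2HX\le f$. Take $r=2$ and $h/2<q_1<h\le q_2$, which the known bounds on $q_1,q_2$ permit. A direct computation then shows the left side of \eqref{Eq: main-criterion} exceeds $(p-1)^4$, however much slack \eqref{Eq: main-ugly} has, so Theorem~\ref{T: criterion} does not apply. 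Similarly, $u=q_1q_2$ with $q_2<h$ can be of size $h^2$. Since $X\le\sqrt{f/(2h)}$, this breaks $X/u\ge1$ or $E_u(X)>0$ for small $r$. No balancing of $H$ repairs this.

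The missing idea is exactly what the generalised $\lambda_h$ in Theorem~\ref{T: criterion} is for: keep $q_i$ in $v$ even when $q_i<h$, provided $q_i\ge T:=16rp\,h/(h-2p)$. Then $\lceil h/q_i\rceil<1+(h-2p)/(16rp)$, so $\bigl((h-2p)/(h-p\lambda_h(v))\bigr)^{2r}\le(1-1/8r)^{-2r}<1.3$. This loss is paid for by the sharper bound $q_1q_2<1.670f^{1/2}L^{3/2}$, available once $q_1<h$, which gives $H_C/H<0.0675$. Only primes below $T$ go into $u$, and here \eqref{Eq: main-ugly} is used a second time. It gives $(p-1)h/(h-2p)\ll f^{(r-1)/4r^2}L^{-2/r}$, hence $T\ll f^{1/16}L$. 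This bound on $u$, tied to $p$ and not to $h$, is what makes $u(H_C/H)^2$, $X/u^2$ and $E_u(X)$ harmless in the paper's Cases II and III.

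Second, your treatment of the degenerate pairs $(q_1,q_2)=(2,3),(3,5)$ does not work. Condition (1) of Proposition~\ref{P: Heilbronn} requires $mq_2k\not\equiv f\pmod{q_1^2}$ for $1\le k<q_1$, i.e.\ that $m$ avoid prescribed residue classes modulo $4$ or $9$. Theorem~\ref{T: criterion} only controls $\chi(m)$ and $\gcd(m,uv)$, so adding primes to $u$ or $v$, or shrinking $H$, cannot enforce a congruence.

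The paper does not attempt this. In Proposition~\ref{P: q1>100} it disposes of these two pairs by \cite[Proposition 5.1]{McGown2012}, verifying $72(p-1)f^{1/2}\ln(4f)+35\le f$ and $507(p-1)f^{1/2}\ln(9f)+448\le f$. It handles every other $q_1\le100$ through the proof of \cite[Theorem 5.8]{McGown2012}, by showing that \eqref{Eq: main-ugly} implies $E'_0(2)(p-1)^2L^{3/2}<f^{3/8}$. Thereafter $q_1>100$, so only Proposition~\ref{P: Lemma 5.1} is needed at the end. Your separate handling of $q_1\in\{2,3,7\}$ via conditions (2), (4), (5) then becomes unnecessary.
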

\subsection{Eliminating small non-residues}
In order to invoke Proposition~\ref{P: Lemma 5.1}, we need to establish Theorem~\ref{T: main-ugly} in the case when $q_1 = 2,3,7$.
\begin{prop} \label{P: q1>100}
If $f\geq 10^{40}$, $q_1\leq 100$ and the hypothesis of Theorem~\ref{T: main-ugly} holds, then $K$ is not norm-Euclidean.
\end{prop}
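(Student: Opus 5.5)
When $q_1\le 100$ I would invoke Proposition~\ref{P: Heilbronn} rather than Proposition~\ref{P: Lemma 5.1}. The point is that $q_1$ is bounded, so the factor $q_1\ln q_1$ is an absolute constant. The whole cost is then absorbed by the room between $q_2 H$ and $f$, since $q_2\le 1.670f^{1/4}L^{3/2}$ is much smaller than the bound for $q_1q_2$ in Proposition~\ref{P: q1q2}.

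Concretely, I would apply Theorem~\ref{T: criterion} with $\omega=\chi(q_2)^{-1}$, $u=1$ and $v=q_1q_2$. The choice of $v$ gives $\lambda_h(v)=\lceil h/q_1\rceil+\lceil h/q_2\rceil$, which may be as large as $h/2+h/q_2+2$. That is too large for $p\lambda_h(v)<h$, so I would \emph{not} put $q_1$ into $v$ directly.

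Instead, I would take $v=q_2$ if $q_2\ge h$, so that $\lambda_h(v)=1$. I would then handle coprimality to $q_1$ by putting $q_1$ into $u$: since $q_1\le 100<h$, the factor $u=q_1$ is admissible. This costs the factor $\frac{\sigma(u)}{\phi(u)}u=\frac{(q_1+1)q_1}{q_1-1}\le 102$ in \eqref{Eq: main-criterion}, together with a slightly worse $E_u(X)$. The prime $q_2$ itself has $q_2>h=\lceil f^{1/2r}\rceil$ in most cases, because $r\ge2$ and $q_2$ is typically of size around $f^{1/4}$. In general, though, $\lceil h/q_2\rceil\le h/q_2+1$, and if $q_2$ is small then $\lambda_h(v)\le 2$ still holds whenever $q_2\ge h$. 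If $q_2<h$, I would also place $q_2$ into $u$.

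The key step is to choose
\[
H=\frac{f}{c\,q_1q_2\ln q_1}
\]
with the appropriate constant $c$ from conditions (2)--(5) of Proposition~\ref{P: Heilbronn}. That is $c=3$ for $q_1\ne2,3$, and for $q_1=2,3$ use (4) or (5), whose exceptional cases $q_2=3,5$ are checked separately via (1). The resulting $m\le H$ satisfies $cq_1q_2m\ln q_1<f$, so $K$ is not norm-Euclidean.

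It remains to verify \eqref{Eq: main-criterion} for this $H$. Using $q_1\le100$ and $q_2\le1.670f^{1/4}L^{3/2}$, we get $H\gg f^{3/4}L^{-3/2}$. This is far larger than the $H=f^{1/2}/(2.581L^2)$ of Theorem~\ref{T: main-ugly}, while the other factors differ only by bounded constants; here $\lambda_h$ is at most $2$, matching the $h-2p$ in $C_p$. The condition $2HX\le f$ may fail for such large $H$, so I would cap $H$ at $\min\{f/(cq_1q_2\ln q_1),\,(fh/2)^{1/2}\}$.

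The main obstacle is the bookkeeping showing that \eqref{Eq: main-ugly} implies \eqref{Eq: main-criterion} for this capped $H$. This amounts to comparing $H^{-2}$ and $E_{q_1}(X)$ with their counterparts in $C_p$. I would do this by showing that the new $H^2$ exceeds the old one by a factor of at least roughly $f^{1/4}/L^{3}$, which dwarfs the constant $102$ and the change in $E$ for $f\ge10^{40}$.
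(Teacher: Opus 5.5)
Your route differs from the paper's. The paper does not rerun Theorem~\ref{T: criterion} here: it quotes the proof of \cite[Theorem 5.8]{McGown2012} and only checks that \eqref{Eq: main-ugly} implies $E'_0(2)(p-1)^2L^{3/2}<f^{3/8}$. Your main case ($u=q_1$, $v=q_2$, $q_2\ge h$) does go through with a large margin, but the sub-case $q_2<h$ fails as written.

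Placing $q_2$ into $u$ makes $u=q_1q_2$ as large as about $97h$. Meanwhile $2HX\le f$ forces $X\le\sqrt{f/(2h)}$, and $E_u(X)>0$ requires at least $\sigma(u)^2<\frac{24}{\pi^2}X$. For $r=2$ we have $h\approx f^{1/4}$ and $X\le f^{3/8}/\sqrt2$, so every $q_2\in(f^{3/16},h)$ gives $E_u(X)<0$ and Theorem~\ref{T: criterion} cannot be invoked. Nothing excludes such $q_2$, since only $q_2\le 1.670f^{1/4}L^{3/2}$ is known. If you keep such $q_2$ in $v$ instead, then $\lambda_h(v)=\lceil h/q_2\rceil>2$ once $q_2<h/2$, so your claim that $\lambda_h$ ``matches the $h-2p$ in $C_p$'' is false there. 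The missing idea is a threshold of size $\asymp rp$ rather than $h$. Take $T=16rp\,h/(h-2p)$ as in the proof of Theorem~\ref{T: main-ugly}. Put $q_2$ into $v$ when $q_2\ge T$; the loss $\bigl((h-2p)/(h-\lambda_h(v)p)\bigr)^{2r}$ is then $<1.136$. Put it into $u$ only when $q_2<T$, where $u<97T$ is small compared with $\sqrt{X}$. A side remark: when $H=\sqrt{fh/2}$ is the binding term, your gain $(H/H_C)^2$ is $\frac{2.581^2}{2}hL^4$, not $f^{1/4}/L^3$. That is still ample.

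Second, the exceptional pairs $(q_1,q_2)=(2,3)$ and $(3,5)$ are not actually handled. Condition~(1) of Proposition~\ref{P: Heilbronn} demands $mq_2k\not\equiv f\pmod{q_1^2}$ for all $1\le k<q_1$, i.e.\ $m$ must lie in prescribed residue classes modulo $4$ or $9$. Theorem~\ref{T: criterion} only controls $\chi(m)$ and $(m,uv)$, so it cannot produce such an $m$, and ``checked separately via (1)'' has no argument behind it. You need an additional input here. The paper uses \cite[Proposition 5.1]{McGown2012}, verifying $72(p-1)f^{1/2}\ln(4f)+35\le f$ and $507(p-1)f^{1/2}\ln(9f)+448\le f$ via $p-1<f^{1/4}/2$.

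Finally, your assertion $q_1\le 100<h$ is correct but needs a line when $r$ is large. From \eqref{Eq: main-ugly}, $C_p>1$ and $r<p$, one gets $r\le p-1<f^{1/4r}$. Hence $h\ge f^{1/2r}>r^2\ge 100$ for $r\ge 10$, while $h\ge 10^{20/r}\ge 100$ for $r\le 10$.
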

\begin{proof}
Assume first $q_1=2$ and $q_2 = 3$. Then, 
\begin{align*}
72(p-1)f^{1/2}\ln(4f)+35 \leq 72 (f^{1/4}/2)f^{1/2}\ln(4f) + 35 =  36 f^{3/4}\ln(4f) + 35.
\end{align*}
Since $36 f^{3/4}\ln(4f) + 35\leq f$ whenever $f\geq 10^{40}$, $K$ is not norm-Euclidean by~\cite[Proposition 5.1]{McGown2012}. Similarly, one readily establishes that the other exceptional pair $q_1=3$, $q_2=5$ leads to $K$ failing to be norm-Euclidean by verifying 
$$ 507(p-1)f^{1/2}\ln(9f) + 448 \leq f, $$
and appealing to~\cite[Proposition 5.1]{McGown2012} again. By the proof of~\cite[Theorem 5.8]{McGown2012}, it is sufficient to show 
$$ C_p(f,h,r)(p-1)^rL^2<f^{\frac{r-1}{4r}} \quad \text{implies} \quad E'_0(2)(p-1)^2L^{3/2}<f^{3/8} $$ 
where $E'_0(2)< 5.268\times 10^9$. If $h = \left\lceil f^{1/2r}\right\rceil$, then $W(f,h,r) \geq 2r-1$ and $h/f^{1/2r} \geq 1$. Moreover, we have $0<E_1(X)<1$ and $h/(h-2p) > 1$. In particular, 
$$C_p(f,h,r)\geq 2.581 \frac{\pi}{\sqrt{6}}\sqrt{2r-1} =:B_r. $$ 
Then 
\begin{align*}
C_p(f,h,r)(p-1)^rL^2 < f^{\frac{r-1}{4r}} \quad \text{implies} \quad (p-1)^2 < \left(\frac{f^{\frac{r-1}{4r}}}{B_r\cdot L^2}\right)^{2/r}.
\end{align*}
Let $\delta_r := \frac{3}{8} - \frac{r-1}{2r^2} \geq \frac{1}{4}$, which is increasing in $r$. Then
\begin{align*}
    E'_0(2)(p-1)^2L^{3/2}f^{-3/8} & < 5.268\times 10^9 \left(\frac{f^{\frac{r-1}{4r}}}{B_r\cdot L^2}\right)^{2/r} L^{3/2} f^{-3/8} \\
    & = 5.268\times 10^9 B_r^{-2/r} L^{(3r-8)/2r} f^{-\delta_r}.
\end{align*}
For $f\geq 10^{40}$ and $2\leq r \leq 8$, we directly verify that the right hand side is $<1$. When $r\geq 9$, we bound $\delta_r \geq \delta_9 = 3/8 - 4/81 > 0.3256$. For convenience, we remove $B_r^{-2/r}<1$ and check 
$$ 5.268\times 10^9 L^{(3r-8)/2r}/f^{0.3256} < 1. $$
This establishes the claim.
\end{proof}
\subsection{The full conductor bound} To establish Theorem~\ref{T: main-ugly}, it will be useful to rewrite Condition~\eqref{Eq: main-ugly}. Define the quantities $H_C := f^{1/2}L^{-2}/2.581$ and $X_C := H_C/h$. Assuming $2p<h$, we have 
\begin{align} \label{Eq: main-rewritten}
f^{\frac{r-1}{4r}} & > C_p(f,h,r)(p-1)^rL^{2} \nonumber \\
\iff 1 & > C_p(f,h,r)^2(p-1)^{2r}L^{4}f^{\frac{1-r}{2r}}  \nonumber\\
\iff 1 & > 2.581^2\frac{1}{E_1(X_C)}\frac{\pi^2}{6}h\frac{W(f,h,r)}{f^{1/2r}}\left(\frac{(p-1)h}{h-2p}\right)^{2r}L^4 f^{\frac{1-r}{2r}} \nonumber \\
\iff 1 & > \frac{1}{E_1(X_C)}\frac{\pi^2}{6}hf^{1/2}\left(\frac{(p-1)h}{h-2p}\right)^{2r}\frac{W(f,h,r)}{H_{C}^2}.
\end{align}

\begin{proof}[Proof of Theorem~\ref{T: main-ugly}]
Since we may assume $q_1> 100$ by Proposition~\ref{P: q1>100}, we will ultimately invoke Proposition~\ref{P: Lemma 5.1}. Thus, it is sufficient for failure of norm-Euclideanity to find a positive integer $m$ with $(m,q_1q_2)=1$ and $\chi(m) = \chi(q_2)^{-1}$ satisfying
$$ f\geq \text{max}\left\{q_1,\frac{2.1}{p}f^{1/p}L\right\}q_2m. $$
Set $h = \lceil f^{1/2r}\rceil$ and let
$$ H_1:=\frac{f}{q_1q_2}, \quad H_2:= \frac{pf^{1- 1/p}}{2.1 L q_2} \quad \text{and} \quad H_3 := \sqrt{\frac{fh}{2}}. $$
In addition, set $H = \text{min}\left\{H_1,H_2,H_3\right\}$, where the first two values correspond to the maximum and the latter is to ensure $2XH \leq f$ where $X = H/h$.
 By the upper bounds on $q_1q_2$ and $q_2$, together with $p\geq 5$, we have 
\begin{align*} 
    H \geq \text{min}\left\{\frac{f^{1/2}}{2.581L^2},\frac{pf^{1- 1/p-1/4}}{3.507 L^{5/2}}, \frac{f^{1/2+1/4r}}{\sqrt{2}}\right\} & \geq \frac{f^{1/2}}{L^2} \cdot \text{min}\left\{\frac{1}{2.581},\frac{f^{1/20}}{0.7014 L^{1/2}}\right\} \\
    & \geq \frac{f^{1/2}}{2.581 L^2} = H_C.
\end{align*}
The reduction follows from the inequalities $1< f^{1/2r}$ and $\frac{1}{2.581}<\frac{f^{1/20}}{0.7014 L^{1/2}}$ holding in the range $f\geq 10^{40}$. Since $H_C\leq H$, we also have $X_C\leq X$. Recall that we assume $2p<h$ and we will show $\lambda_h(v)p < h$ for any choice of $v \mid q_1q_2$. Via Criterion~\eqref{Eq: main-rewritten}, we will establish the following:
\begin{align*}
& \frac{1}{E_1(X_C)}\frac{\pi^2}{6}hf^{1/2}\left(\frac{(p-1)h}{h-2p}\right)^{2r}\frac{W(f,h,r)}{H_{C}^2} \\
\geq \ & \frac{1}{E_u(X)}\frac{\pi^2}{6}\frac{\sigma(u)}{\phi(u)}uhf^{1/2}\left(\frac{(p-1)h}{h-p \lambda_h(v)}\right)^{2r}\frac{W(f,h,r)}{H^2}.
\end{align*}
Equivalently
\begin{equation} \label{Eq: proof of main condition}
1 \geq \frac{E_1(X_C)}{E_u(X)}\frac{\sigma(u)}{\phi(u)}u\left(\frac{h-2p}{h-\lambda_h(v)p}\right)^{2r}\left(\frac{H_{C}}{H}\right)^2.
\end{equation}
Let $T=16rp\frac{h}{h-2p}$ and $uv = q_1q_2$. Moreover, let $q_i \mid u$ when $q_i< \text{min}\{h,T\}$. The variable $T$ is chosen to match with the hypothesis of Theorem~\ref{T: main-ugly}. Let $A := h/(h-2p)$ and recall the quantity
$$ B_r = 2.581 \frac{\pi}{\sqrt{6}}\sqrt{2r-1}. $$
From the definition of $C_p(f,h,r)$ and $W(f,h,r)$, we have $C_p(f,h,r) \geq B_rA^r$. Moreover,
$$C_p(f,h,r)(p-1)^rL^2 < f^{\frac{r-1}{4r}} \implies (p-1)A < \frac{f^{\frac{r-1}{4r^2}}}{(L^{2}B_r)^{1/r}} =:P. $$
Now $p\geq 5$, so $pA \leq 5/4(p-1)A$. We have
$$ T = 16rpA \leq 20r(p-1)A < 20rP. $$
Since $100<q_1<q_2$, we also have
\begin{equation} \label{Eq: CaseII-III sig/phi}
\frac{\sigma(u)}{\phi(u)} = \frac{\prod_{q_i\mid u} (q_i+1)}{\prod_{q_i\mid u} (q_i-1)} = \prod_{q_i\mid u} \frac{q_i+1}{q_i-1} \leq \left(\frac{102}{100}\right)^2=1.0404,
\end{equation}
where the last inequality follows from $\frac{x+1}{x-1}=1+\frac{2}{x-1}$ being decreasing in $x>1$. Additionally, we have $\sigma(u) \leq 1.0404\cdot\phi(u) \leq 1.0404u$. In order to invoke Theorem~\ref{T: criterion}, we must also verify $X/u \geq 1$, $X\geq 2$ and $E_u(X)>0$. If we assume $u=1$ and $X/u^2 \geq 5$, then the first two claims follow since $X_C\leq X$. Moreover 
\begin{align} \label{Eq: CaseI E_u(X)}
    E_u(X) = \ & 1 - \frac{\pi^2}{6}\left(\frac{1}{4}+2+\frac{1}{X}\right)\frac{1}{X} \nonumber \\
    \geq \ & 1 - \frac{\pi^2}{6}\left(\frac{1}{4}+2+\frac{1}{5}\right)\frac{1}{5} \geq 0.1939.
\end{align}
If instead we let $u> 1$ and assume $X/u^2 \geq 2$, then the first two claims again follow. Then
\begin{align} \label{Eq: CaseII-III E_u(X)}
E_u(X) \geq & \ 1 -\frac{\pi^2}{6}(1.0404u)\left(\frac{1.0404u}{4} + 1 + \frac{1.0404u}{X}\right)\frac{1}{X} \nonumber \\
\geq & \ 1- \frac{\pi^2}{6}\left(\frac{1.0404^2}{4}\frac{u^2}{X} + \frac{1.0404}{u}\frac{u^2}{X} + \frac{1.0404^2}{X}\frac{u^2}{X}\right) \nonumber \\
\geq & \ 1 - \frac{\pi^2}{6}\left(\frac{1.0404^2}{8} + \frac{1.0404}{2\cdot 101} + \frac{1.0404^2}{4\cdot 101^2}\right) > 0.7689.
\end{align} 
Thus, we need only check that $X\geq 5$ for $u=1$ and $X/u^2 \geq 2$ for $u>1$ in order to invoke Theorem~\ref{T: criterion}. We split into cases depending on factors of $u$.

\noindent
{\bf Case Ia:}  $u=1$ and $q_1\geq h$.
Condition~\eqref{Eq: proof of main condition} becomes
$$ 1 \geq \frac{E_1(X_C)}{E_1(X)}\left(\frac{h-2p}{h-\lambda_h(q_1q_2)p}\right)^{2r}\left(\frac{H_{C}}{H}\right)^2 = \frac{E_1(X_C)}{E_1(X)}\left(\frac{H_{C}}{H}\right)^2. $$
Moreover, we have
\begin{align*}
    X \geq \ & X_C = \frac{f^{1/2}}{2.581L^2}\frac{1}{h} \geq \frac{f^{1/4}}{2.581L^2(1+f^{-1/4})} \geq 456731. 
\end{align*}
The case follows from the previous observation that $H_C\leq H$ and that 
$$ E_1(X) = 1 - \frac{\pi^2}{6}\left(\frac{1}{4} + 2 + \frac{1}{X}\right)\frac{1}{X}<1 $$
is increasing with $X$.

\noindent
{\bf Case Ib:}  $u=1$ and $q_1 < h$. 
We must necessarily have $T\leq q_1 < q_2$. Condition~\eqref{Eq: proof of main condition} becomes 
$$ 1 \geq \frac{E_1(X_C)}{E_1(X)}\left(\frac{h-2p}{h-\lambda_h(q_1q_2)p}\right)^{2r}\left(\frac{H_{C}}{H}\right)^2. $$
Now
$$ \left\lceil\frac{h}{q_1}\right\rceil < \frac{h}{q_1} + 1 \leq \frac{h}{T} + 1 = 1 + \frac{h-2p}{16rp}. $$
We can bound $\lceil h/q_2\rceil$ similarly, by the worst case when $q_2< h$. We conclude
$$ \left(\frac{h-2p}{h-\lambda_h(v)p}\right)^{2r} \leq \left(\frac{h-2p}{h-\left(2+\frac{h-2p}{8rp}\right)p}\right)^{2r} = \left(\frac{h-2p}{h-2p-\frac{h-2p}{8r}}\right)^{2r} = \left(\frac{1}{1-1/8r}\right)^{2r}, $$
a decreasing function in $r\geq 2$. Thus, we attain the maximum at $r=2$ where 
\begin{equation} \label{Eq: CaseI h-2p}
\left(\frac{h-2p}{h-\lambda_h(v)p}\right)^{2r}  < 1.3.
\end{equation} 
We showed previously that $H_C\leq H$ via $q_1q_2<2.581f^{1/2}L^2$. Since $q_1<h$ by assumption, we have $q_1q_2<1.670f^{1/2}L^{3/2}$ using the pointwise bound on $q_2$ alone. In this case, we obtain 
\begin{align*}
    H \geq \ & \text{min}\left\{\frac{f^{1/2}}{1.670 L^{3/2}},\frac{pf^{1- 1/p-1/4}}{3.507 L^{5/2}}, \frac{f^{1/2+1/4r}}{\sqrt{2}}\right\} \\
    \geq \ & \frac{f^{1/2}}{L^{3/2}} \cdot \text{min}\left\{\frac{1}{1.670},\frac{f^{1/20}}{0.7014 L}\right\} \\
    \geq \ & \frac{f^{1/2}}{1.670 L^{3/2}}.
\end{align*}
Thus, we have the ratio
\begin{equation} \label{Eq: CaseI uH_C/H}
H_C/H \leq \frac{f^{1/2}}{2.581 L^2}\left/ \frac{f^{1/2}}{1.670 L^{3/2}}\right. = \frac{1.670}{2.581 L^{1/2}} < 0.06743. 
\end{equation}
Combining Equations~\eqref{Eq: CaseI E_u(X)},~\eqref{Eq: CaseI h-2p} and~\eqref{Eq: CaseI uH_C/H}, we conclude
$$ \frac{E_1(X_C)}{E_1(X)}\left(\frac{h-2p}{h-\lambda_h(q_1q_2)p}\right)^{2r}\left(\frac{H_{C}}{H}\right)^2 < 1\cdot 1.3 \cdot 0.06743^2 < 1. $$
This, together with the fact that $X\geq X_C > 456731$ as before, completes the case. 

\noindent
{\bf Case II:}  $u=q_1$. Thus, we have $q_1< T$ and $q_2\leq 1.670f^{1/4}L^{3/2}$. Moreover, since 
$$L = 2r \ln f^{1/2r} > 2r \ln q_1> 2r \ln (101),$$
we may assume $r<L/(2\ln 101)$. Setting $c_T := 10/\ln 101$ gives the estimates 
\begin{align*}
    T < \ & 20rP < \frac{20}{2\ln{101}}f^{\frac{r-1}{4r^2}}L^{1-2/r} \leq c_Tf^{1/16}L \\
    \frac{T^2}{h} < \ & c_T^{2}\frac{L^{2-4/r}}{B_r^{2/r}f^{1/2r^2}},
\end{align*}
which will be used frequently throughout. We consider each $X_i:=H_i/h$ in turn.
\begin{align*}
X_1/u^2 \geq \ & \frac{f}{u^3q_2 h} > \frac{1}{1.670 c_T^3 }\frac{f^{3/4-1/2r-3\frac{r-1}{4r^2}}}{L^{9/2-6/r}(1+f^{-1/2r})} \\
\geq \ & \frac{1}{1.670c_T^3}\frac{f^{5/16}}{L^{3/2}(1+f^{-1/4})} > 2.105 \times 10^8 \quad (r=2) \\
X_2/u^2 \geq \ & \frac{5f^{4/5}}{2.1 u^2 L q_2 h} > \frac{5}{2.1\cdot1.670c_T^2}\frac{f^{4/5-1/2r-1/4-\frac{r-1}{2r^2}}}{L^{9/2-4/r}(1+f^{-1/2r})} \\
\geq \ & \frac{5}{2.1\cdot1.670c_T^2}\frac{f^{7/40}}{L^{5/2}(1+f^{-1/4})} > 37.30 \quad (r=2) \\
X_3/u^2 \geq \ & \frac{f^{1/2}h^{1/2}}{\sqrt{2}u^2h} > \frac{1}{\sqrt{2}c_T^2}\frac{f^{1/2+1/4r-1/2r-\frac{r-1}{2r^2}}}{L^{2-4/r}(1+f^{-1/2r})} \\
\geq \ & \frac{1}{\sqrt{2}c_T^2}\frac{f^{1/4}}{(1+f^{-1/4})}>1.506 \times 10^9 \quad (r=2).
\end{align*}
Here, the parenthesised $r$-values denote the worst choices $2\leq r<p$, in the sense that they minimise the lower bound. In particular, we have 
\begin{equation*} 
X/u^2 \geq \text{min}\left\{X_1/u^2,X_2/u^2,X_3/u^2\right\} > 37.30.
\end{equation*}
We consider $H_1$ and $H_3$, before handling the more cumbersome $H_2$ separately.
\begin{align*}
u(H_C/H_1)^2 \leq \ & \frac{f}{2.581^2L^4}\frac{q_1}{f^2/(q_1^2q_2^2)} = \frac{1}{2.581^2}\frac{q_1^3q_2^2}{fL^4} \leq \frac{1.670^2}{2.581^2}\frac{T^3}{f^{1/2}L} \\
\leq \ & \frac{1.670^2c_T^3}{2.581^2}\frac{L^{2-6/r}f^{\frac{3(r-1)}{4r^2}}}{f^{1/2}} \leq 4.265 \frac{L^2}{f^{5/16}} < 1.144 \times 10^{-8} \quad (r=3) \\
u(H_C/H_3)^2 \leq \ & \frac{f}{2.581^2L^4}\frac{q_1}{fh/2} = \frac{2}{2.581^2} \frac{q_1}{L^4h} < \frac{2}{2.581^2} \frac{1}{L^4}  < 4.173 \times 10^{-9}.
\end{align*}
Here, the parenthesised $r$-values maximise the upper bound. Now consider $H_2$ and assume first that $p=5$, so $2\leq r \leq 4$. Thus
\begin{align*}
    u(H_C/H_2)^2 \leq \ & \frac{f}{2.581^2L^4}\frac{q_1}{p^2f^{2-2/p}/(2.1Lq_2)^2} = \frac{2.1^2}{2.581^2}\frac{q_1q_2^2}{L^2p^2f^{1-2/p}} \\
\leq \ & \frac{2.1^2\cdot1.670^2}{2.581^2}\frac{TL}{p^2f^{1/2-2/p}} < \frac{2.1^2\cdot1.670^2\cdot20}{2.581^2\cdot5^2}\frac{rPL}{f^{1/10}} \\
< \ & \frac{2.1^2\cdot1.670^2\cdot20}{2.581^2\cdot5^2} \frac{rL^{1-2/r}}{B_r^{1/r}f^{1/10-\frac{r-1}{4r^2}}} \\
\leq \ & 1.478 \cdot \text{max}\left\{\frac{2}{B_2^{1/2}f^{3/80}},\frac{3L^{1/3}}{B_3^{1/3}f^{4/90}},\frac{4L^{1/2}}{B_4^{1/4}f^{3/64}}\right\} < 0.5498.
\end{align*}
Now, assume instead $p\geq 7$. Then, we have
\begin{align*}
    u(H_C/H_2)^2 \leq \ & \frac{f}{2.581^2L^4}\frac{q_1}{p^2f^{2-2/p}/(2.1Lq_2)^2} = \frac{2.1^2}{2.581^2}\frac{q_1q_2^2}{L^2p^2f^{1-2/p}} \\
\leq \ & \frac{2.1^2\cdot1.670^2}{2.581^2}\frac{TL}{p^2f^{1/2-2/p}} < \frac{2.1^2\cdot1.670^2c_T}{2.581^2\cdot7^2} \frac{L^{2-2/r}}{f^{3/14-\frac{r-1}{4r^2}}} \\
\leq \ & 8.165\times 10^{-2}\frac{L^{2}}{f^{17/112}} \leq 5.876 \times 10^{-4} \quad (r=2).
\end{align*}
In particular, we have
\begin{equation} \label{Eq: CaseII uH_C/H}
    u(H_C/H)^2 \leq \text{max}\left\{u(H_C/H_1)^2,u(H_C/H_2)^2,u(H_C/H_3)^2\right\} < 0.5498.
\end{equation}
Recall that $E_u(X) > 0.7689$ and $E_1(X)<1$. If $q_2\geq T$, we have 
$$\lambda_h(v) = \left\lceil\frac{h}{q_2}\right\rceil < 1 + \frac{h}{T} = 1 + \frac{h-2p}{16rp}. $$
Therefore, we may bound
\begin{equation} \label{Eq: CaseII h-2p}
\left(\frac{h-2p}{h-\lambda_h(q_2)p}\right)^{2r} < \frac{1}{(1-1/16r)^{2r}}<1.136.
\end{equation}
If instead $q_2 < T$, then $q_2\geq h$, leading to $(h-2p)/(h-\lambda_h(q_2)p) = (h-2p)/(h-p) < 1$. Combining the bounds~\eqref{Eq: CaseII-III sig/phi},~\eqref{Eq: CaseII-III E_u(X)},~\eqref{Eq: CaseII uH_C/H} and~\eqref{Eq: CaseII h-2p}, we obtain
\begin{align*}
    \frac{E_1(X_C)}{E_u(X)}\frac{\sigma(u)}{\phi(u)}u\left(\frac{h-2p}{h-\lambda_h(v)p}\right)^{2r}\left(\frac{H_{C}}{H}\right)^2 < \frac{1}{0.7689} \cdot 1.0404 \cdot 1.136 \cdot 0.5498 < 1.
\end{align*}
Since Criterion~\eqref{Eq: proof of main condition} is satisfied, the case follows.

\noindent
{\bf Case III:}  $u=q_1q_2$. Thus, we have $q_1,q_2< T$. We consider each $X_i$ in turn.
\begin{align*}
X_1/u^2 \geq \ & \frac{f}{u^3 h} > \frac{1}{c_T^6}\frac{f^{1-1/2r-\frac{3(r-1)}{2r^2}}}{L^{6-12/r}(1+f^{-1/2r})} \\
\geq \ & \frac{1}{c_T^6}\frac{f^{3/8}}{(1+f^{-1/4})} > 9.662\times 10^{12} \quad (r=2) \\
X_2/u^2 \geq \ & \frac{5f^{4/5}}{2.1 u^2 L q_2 h} > \frac{5}{2.1c_T^5}\frac{f^{4/5-1/2r-\frac{5(r-1)}{4r^2}}}{L^{6-10/r}(1+f^{-1/2r})} \\
\geq \ & \frac{5}{2.1c_T^5}\frac{f^{19/80}}{L(1+f^{-1/4})} > 1.711\times 10^{6} \quad (r=2)\\
X_3/u^2 \geq \ & \frac{f^{1/2}h^{1/2}}{\sqrt{2}u^2h} > \frac{1}{\sqrt{2}c_T^4}\frac{f^{1/2+1/4r-1/2r-\frac{(r-1)}{r^2}}}{L^{4-8/r}(1+f^{-1/2r})} \\
\geq \ & \frac{1}{\sqrt{2}\cdot c_T^4}\frac{f^{1/8}}{(1+f^{-1/2r})}>3207 \quad (r=2).
\end{align*}
In particular, we have $X/u^2 \geq \text{min}\left\{X_1/u^2,X_2/u^2,X_3/u^2\right\} > 3207$. We proceed by bounding each $H_i$ in terms of $H_C$.
\begin{align*}
u(H_C/H_1)^2 \leq \ & \frac{f}{2.581^2L^4}\frac{q_1q_2}{f^2/(q_1^2q_2^2)} = \frac{1}{2.581^2}\frac{q_1^3q_2^3}{fL^4} \\
\leq \ & \frac{c_T^6}{2.581^2}\frac{1}{L^2f^{1/3}} < 8.501\times10^{-17} \quad (r=3) \\
u(H_C/H_2)^2 \leq \ & \frac{f}{2.581^2L^4}\frac{q_1q_2}{p^2f^{2-2/p}/(2.1Lq_2)^2} \leq \frac{2.1^2}{5^2\cdot 2.581^2}\frac{q_1q_2^3}{L^2f^{3/5}} \\
\leq \ & \frac{2.1^2c_T^4}{5^2\cdot 2.581^2}\frac{1}{L^{2/3}f^{7/45}} < 1.716 \times 10^{-8} \quad (r=3) \\
u(H_C/H_3)^2 \leq \ & \frac{f}{2.581^2L^4}\frac{q_1q_2}{fh/2} = \frac{2}{2.581^2}\frac{q_1q_2}{L^4h} \\
< \ & \frac{2c_T^2}{2.581^2}\frac{1}{L^{2}} < 1.662 \times 10^{-4} \quad (r=10).  
\end{align*}
Note that technically $r=10$ is not admissible at $f=10^{40}$ due to the upper bound $r<L/(2\ln 101) \approx 9.978$. This is not an issue since any other value for $r$ would lead to a lower upper bound. In particular, we have
\begin{equation} \label{Eq: CaseIII uH_C/H}
u(H_C/H)^2 \leq \text{max}\left\{u(H_C/H_1)^2,u(H_C/H_2)^2,u(H_C/H_3)^2\right\} \leq 1.662 \times 10^{-4}. 
\end{equation}
Since $v=1$, we have $\lambda_h(v) = 0$, leading to $h-2p<h-\lambda_h(1)p$. Combining the bounds~\eqref{Eq: CaseII-III sig/phi},~\eqref{Eq: CaseII-III E_u(X)} and ~\eqref{Eq: CaseIII uH_C/H}, we obtain
\begin{align*}
    & \frac{E_1(X_C)}{E_u(X)}\frac{\sigma(u)}{\phi(u)}u\left(\frac{h-2p}{h-\lambda_h(1)p}\right)^{2r}\left(\frac{H_{C}}{H}\right)^2 \\
    < \ & \frac{1}{0.7689} \cdot 1.0404 \cdot 1 \cdot 1.662 \times 10^{-4} < 1.
\end{align*}
Since Criterion~\eqref{Eq: proof of main condition} is satisfied, this completes the proof.
\end{proof}
\begin{lemma} \label{L: C_p<D_p}
Fix an integer $2 \leq r < p$. Set $h = \left\lceil f^{1/{2r}}\right\rceil$, $H = f^{1/2}/(2.581L^2)$ and $X = H/h$. If $f\geq 10^{40}$ and $2p<h$, then $C_p(f,h,r)<D_p(f,r)$.
\end{lemma}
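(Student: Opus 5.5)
We compare the two expressions factor by factor. Recall
$$C_p(f,h,r) = 2.581\left(\frac{1}{E_1(X)}\frac{\pi^2}{6}\,\frac{h}{f^{1/2r}}\,W(f,h,r)\right)^{1/2}\left(\frac{h}{h-2p}\right)^r,$$
and $D_p(f,r)=3.311(2r-1+r!)^{1/2}(1-2pf^{-1/2r})^{-r}$. The plan is to bound each of the three pieces $h/f^{1/2r}$, $W(f,h,r)$ and $h/(h-2p)$ using only $f^{1/2r}\le h< f^{1/2r}+1$. We then absorb the factor $1/E_1(X)$ together with the leftover slack into the numerical constant, checking that $2.581\,(\pi^2/6)^{1/2}\cdot(\text{slack})\le 3.311$.

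First, since $h\ge f^{1/2r}$, we have $h^r\ge f^{1/2}$, so $W(f,h,r)=2r-1+r!f^{1/2}h^{-r}\le 2r-1+r!$. This accounts for the factor $(2r-1+r!)^{1/2}$.

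Second, the map $x\mapsto x/(x-2p)$ is decreasing for $x>2p$, and $h\ge f^{1/2r}>2p$ by the hypothesis $2p<f^{1/2r}$ (which is the natural condition implicit in $D_p$). Hence
$$\frac{h}{h-2p}\le \frac{f^{1/2r}}{f^{1/2r}-2p}=(1-2pf^{-1/2r})^{-1}.$$
If only $2p<h$ is assumed, $D_p$ could be negative or undefined, so I would note that the comparison is meant under $2p<f^{1/2r}$, as in Theorem~\ref{T: main}.

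Third, we need $h/f^{1/2r}<1+f^{-1/2r}$ and a lower bound for $E_1(X)$. Since $r<p$ and $p$ is constrained, the factor $h/f^{1/2r}$ could be large only when $f^{1/2r}$ is small. Here I would use that $2p<f^{1/2r}$ with $p\ge5$ gives $f^{1/2r}>10$, so $h/f^{1/2r}<1.1$; a better bound follows from $r\le L/(2\ln 11)$, but $1.1$ may already suffice. For $E_1(X)$, we have $X=H/h\ge f^{1/2}/(2.581L^2(f^{1/2r}+1))$. With $r\ge2$ this is at least $f^{1/4}/(2.581L^2(1+f^{-1/4}))\ge 456731$, as computed in Case Ia, so $E_1(X)$ is within about $10^{-5}$ of $1$.

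The main obstacle is the constant check. We need
$$2.581\sqrt{\pi^2/6}\,\sqrt{1.1/E_1(X)}\approx 2.581\cdot1.2825\cdot1.0488\approx 3.472,$$
which exceeds $3.311$. So the crude bound $h/f^{1/2r}<1.1$ is too weak, and I must bound $h/f^{1/2r}$ more sharply. The ratio needed is $(3.311/(2.581\cdot1.2825))^2\approx 1.0005$, so I must show $h/f^{1/2r}\le 1+f^{-1/2r}$ with $f^{1/2r}$ large, roughly $f^{1/2r}\gtrsim 2000$.

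I would split into two cases.
\begin{itemize}
\item If $r$ is small relative to $L$, then $f^{1/2r}$ is huge and the bound follows immediately.
\item Otherwise, I would exploit that $W(f,h,r)\le 2r-1+r!f^{1/2}h^{-r}$ carries its own slack. With $h\ge f^{1/2r}$, the product $h\,W(f,h,r)/f^{1/2r}$ equals $(2r-1)h/f^{1/2r}+r!f^{1/2}h^{1-r}/f^{1/2r}$, and the second term is at most $r!$ since $h^{1-r}\le f^{(1-r)/2r}$. So only the $(2r-1)$ term carries the factor $h/f^{1/2r}$. Its relative weight is $(2r-1)/(2r-1+r!)$, which is small for moderate $r$, and for $r=2,3$ we have $f^{1/2r}\ge 10^{40/6}$, which makes the ratio negligible.
\end{itemize}
Combining this weighted estimate with $E_1(X)\ge 1-10^{-5}$, I would verify numerically, case by case in $r$, that the resulting constant is below $3.311$.
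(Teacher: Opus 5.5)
Your proposal is correct, but you handle the crucial factor $hW(f,h,r)/f^{1/2r}$ differently from the paper.

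\textbf{The paper's route.} It sets $y=hf^{-1/2r}\in[1,1+f^{-1/2r})$ and $f_r(y)=yW=(2r-1)y+r!\,y^{1-r}$. It then uses the identity $\frac{f_r(y)-f_r(1)}{y-1}=2r-1-r!\sum_{n=1}^{r-1}y^{-n}$, together with $y^r<2$, to show that $f_r$ is decreasing for $r\ge 3$. So rounding $h$ up costs nothing: $yW\le 2r-1+r!$ exactly. Only $r=2$ needs numerical slack, namely $f^{-1/4}\le 10^{-10}$.

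\textbf{Your route.} You bound $r!\,y^{1-r}\le r!$ and accept the relative loss $yW\le(2r-1+r!)(1+\rho_r f^{-1/2r})$, where $\rho_r=\frac{2r-1}{2r-1+r!}$. This loss must fit inside the gap between $2.581\,\pi/\sqrt6\approx 3.3103$ and $3.311$. After allowing for $E_1(X)\ge 1-10^{-5}$, that means roughly $\rho_r f^{-1/2r}\lesssim 4\times10^{-4}$. This does hold.

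\textbf{What you still need to close.} Since $r$ is unbounded, your ``case by case in $r$'' check must be made uniform. Using only $f\ge 10^{40}$, you have $\rho_r f^{-1/2r}\le \rho_r 10^{-20/r}$. This is at most about $7\times10^{-6}$ over all $r\ge 2$: the maximum is near $r=5,6$, and for $r\ge 8$ the factorial decay of $\rho_r$ dominates.

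\textbf{Comparison.} Your route is more elementary. It needs no monotonicity computation and no bound $y^r<2$, hence no $f^{1/2r}>2r$ for the $y$-factor, but it costs a small numerical verification. The paper's route is sharper and needs no numerics beyond $r=2$.

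\textbf{A minor point.} Your worry that only $2p<h$ is assumed is unnecessary. Since $2p$ is an integer and $h=\lceil f^{1/2r}\rceil$, $2p<h$ forces $2p\le h-1<f^{1/2r}$. So $(h/(h-2p))^r\le(1-2pf^{-1/2r})^{-r}$ follows directly from the stated hypotheses, and $D_p(f,r)$ is always well defined and positive here.
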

\begin{proof}
Since $f\geq 10^{40}$ and 
$$X \geq \frac{f^{1/4}}{2.581L^2(1+f^{-1/4})}\geq 456731, $$
we have
\begin{equation*}
    1 > E_1(X) \geq 1 - \frac{\pi^2}{6}\left(\frac{1}{4}+2+\frac{1}{456731}\right)\frac{1}{456731} \geq 0.9999.
\end{equation*}
By definition, $f^{1/2r}\leq h < f^{1/2r}+1$. Set $y=hf^{-1/2r}$ and define
$$ f_r(y) := yW(f,h,r) = (2r-1)y + r!y^{1-r}.$$
Then $1\leq y < 1+f^{-1/2r}$ and $y^r < (1+f^{-1/2r})^r < e^{rf^{-1/2r}} < e^{1/2}<2$. Whenever $r\geq 3$ is fixed and $y>1$, we have
\begin{align*}
    \frac{f_r(y)-f_r(1)}{y-1} = \ & 2r-1 + r!\frac{y^{1-r}-1}{y-1} \\
    = \ & 2r-1 + r!y^{-1}\frac{1-\left(y^{-1}\right)^{r-1}}{y^{-1}-1} \\
    = \ & 2r-1 - r!\sum_{n=1}^{r-1}\frac{1}{y^n} \\
    \leq \ & 2r-1 - r!\cdot(r-1)\frac{1}{2}.
\end{align*} 
Since $2r-1 - r!\cdot(r-1)/2 < 0$ whenever $r\geq 3$, we have 
$$ y\cdot W(f,h,r) = f_r(y) \leq f_r(1) = 2r-1+r!.$$
If instead $r=2$, then 
$$f_2(y)/f_2(1) = (3y+2y^{-1})/5 \leq 1+ \frac{3}{5} f^{-1/4} \leq 1+6\times 10^{-11}.$$ Therefore, for any $r\geq 2$, we may bound
$$ yW(f,h,r) \leq (1+6\times 10^{-11})(2r-1 + r!), $$
leading to
\begin{align*}
    C_p(f,h,r) = \ & 2.581\left(\frac{1}{E_1(X)}\frac{\pi^2}{6}h\frac{W(f,h,r)}{f^{1/2r}}\right)^{1/2}\left(\frac{h}{h-2p}\right)^r \\
    \leq \ & 2.581\left(\frac{1}{0.9999}\frac{\pi^2}{6}(1+6\times 10^{-11})(2r-1 + r!)\right)^{1/2}\left(\frac{h}{h-2p}\right)^r \\
    < \ & 3.311\frac{(2r-1 +r!)^{1/2}}{(1-2pf^{-1/2r})^{r}}= D_p(f,r),
\end{align*}
as required.
\end{proof}

\begin{proof}[Proof of Theorem~\ref{T: main}]
By Lemma~\ref{L: C_p<D_p}, we have
$$C_p(f,h,r)(p-1)^rL^2 < D_p(f,r)(p-1)^rL^2. $$
If the hypothesis $D_p(f,r)(p-1)^rL^2<f^{\frac{r-1}{4r}}$ holds, then certainly we must also have $C_p(f,h,r)(p-1)^rL^2<f^{\frac{r-1}{4r}}$. Invoking Theorem~\ref{T: main-ugly}, we conclude $K$ cannot be norm-Euclidean.
\end{proof}
\subsection{Computations}
Consider a small prime $p>3$. We will compute optimal pairs $(f_0,r)$ such that $f>f_0$ implies $K$ cannot be norm-Euclidean. We rewrite the Criterion~\eqref{Eq: main} by taking logarithms:
\begin{prop}
Consider a cyclic field $K$ of degree $p>3$ and fix an integer $2 \leq r< p$. Assume $2p < f^{1/2r}$, $f\geq 10^{40}$ and set $L = \ln f$. If
\begin{equation} \label{Eq: comp condition}
r\ln(f^{1/2r}-2p)-\frac{r+1}{4r}L-2\ln L -\ln(3.311)-\frac{1}{2}\ln(2r-1+r!) - r\ln(p-1)> 0,
\end{equation}
then $K$ is not norm-Euclidean.
\end{prop}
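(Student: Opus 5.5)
The plan is to show that condition~\eqref{Eq: comp condition} is nothing but the logarithm of Criterion~\eqref{Eq: main}, and then to invoke Theorem~\ref{T: main}. The hypotheses $2\le r<p$, $2p<f^{1/2r}$ and $f\ge 10^{40}$ are exactly those of Theorem~\ref{T: main}, so nothing beyond this rewriting should be needed.

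First, rewrite the factor $(1-2pf^{-1/2r})^{-r}$ appearing in $D_p(f,r)$. Since $2p<f^{1/2r}$, the quantity $1-2pf^{-1/2r}=f^{-1/2r}(f^{1/2r}-2p)$ is positive, so
$$(1-2pf^{-1/2r})^{-r}=\frac{f^{1/2}}{(f^{1/2r}-2p)^{r}}.$$
Substituting this into~\eqref{Eq: main}, the inequality becomes
$$3.311\,(2r-1+r!)^{1/2}(p-1)^rL^2 f^{1/2}<f^{\frac{r-1}{4r}}(f^{1/2r}-2p)^r.$$

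Next, take natural logarithms; all quantities are positive, so this is an equivalence. Collecting the powers of $f$ on one side uses
$$\tfrac12-\tfrac{r-1}{4r}=\tfrac{2r-(r-1)}{4r}=\tfrac{r+1}{4r},$$
and the inequality becomes precisely
$$r\ln(f^{1/2r}-2p)-\tfrac{r+1}{4r}L-2\ln L-\ln 3.311-\tfrac12\ln(2r-1+r!)-r\ln(p-1)>0,$$
which is~\eqref{Eq: comp condition}. Hence~\eqref{Eq: comp condition} implies~\eqref{Eq: main}, and Theorem~\ref{T: main} gives that $K$ is not norm-Euclidean.

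There is no real obstacle here. The only care needed is checking that $f^{1/2r}-2p>0$, so the logarithm is defined and the algebraic manipulations are reversible, and getting the exponent bookkeeping $\tfrac{r+1}{4r}$ right.
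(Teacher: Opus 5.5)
Your proposal is correct and is exactly the paper's argument: after writing $(1-2pf^{-1/2r})^{-r}=f^{1/2}(f^{1/2r}-2p)^{-r}$, condition~\eqref{Eq: comp condition} is just the logarithm of Criterion~\eqref{Eq: main}, so Theorem~\ref{T: main} applies directly. The exponent identity $\tfrac12-\tfrac{r-1}{4r}=\tfrac{r+1}{4r}$ and the positivity of $f^{1/2r}-2p$ are the only checks required.
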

For fixed $r$ and $p$, we show that, if some value $f_0\geq 10^{40}$ satisfies the Condition\eqref{Eq: comp condition}, then so does any $f\geq f_0$. Indeed
\begin{align*}
& \frac{\text{d}}{\text{d}f}\left[r\ln(f^{1/2r}-2p)-\frac{r+1}{4r}L-2\ln L -\ln(3.311)-\frac{1}{2}\ln(2r-1+r!) - r\ln(p-1)\right] \\
& = \frac{1}{2f(1-2pf^{-1/2r})}-\frac{r+1}{4rf}-\frac{2}{fL} \geq \frac{1}{f}\left(\frac{1}{2} - \frac{3/2r}{4r} - \frac{2}{105}\right) \geq \frac{89}{840f} > 0
\end{align*}
In particular, the function is increasing in $f$. Thus, we need only find one value $f_0$ for which Criterion~\eqref{Eq: comp condition} holds in order to guarantee it holds for any $f\geq f_0$. For a fixed prime $p>3$, we compute the smallest such admissible $f_0$ for all $2\leq r< p$ and choose the minimal one. This proves Proposition~\ref{P: comp-table}. 

For $p=5$, the optimal choice was $r=4$, whereas $r=3$ was used for all larger primes. The smallest prime for which $r=2$ is optimal is $p=208931$. In fact, one can show that $r=2$ minimises $f_0$ for all primes $p\geq 208931$. Moreover, if we set $r=2$ for any prime, we obtain the simplified form given in Proposition~\ref{P: main-simp}.
\begin{proof}[Proof of Proposition~\ref{P: main-simp}] 
By assumption, we have $2.721 (p-1)  < f^{1/16}L^{-1}$, implying $p<f^{1/16}/2$. Moreover $$f>(2.721 (5-1)L)^{16}> 10^{49}.$$
Thus
$$ D_p(f,2)^{1/2} = 3.311^{1/2}\frac{(2\cdot 2-1+2!)^{1/4}}{1-2pf^{-1/4}} \leq \frac{3.311^{1/2}\cdot 5^{1/4}}{1-(10^{47})^{-3/16}} \leq 2.721. $$
If we set $r=2$, Criterion~\eqref{Eq: main} is equivalent to
$$ D_p(f,2)^{1/2}(p-1) < f^{1/16}L^{-1}. $$
Since 
$$D_p(f,2)^{1/2}(p-1)\leq 2.721 (p-1), \quad f\geq 10^{40} \quad \text{and} \quad 2p < f^{1/4}, $$
we may invoke Theorem~\ref{T: main}. This completes the proof. 
\end{proof}

\section{Future directions} \label{S: future}
We outline some natural extensions to the work presented here. 
\subsection{The constant term in $D_p(f,r)$}
One would expect to gain a substantial improvement for larger $p$ by enforcing $f\geq 10^\alpha$ for some $\alpha>40$. The constant $3.311 \approx 2.581 \frac{\pi}{\sqrt{6}}$ appearing in Criterion~\eqref{Eq: main} is governed by the combined bound $q_1q_2 \leq 2.581 f^{1/2}L^2$. If one chooses a larger cut-off $f\geq 10^\alpha$, then one would expect to immediately obtain an improvement via~\cite[Corollary 7.5]{LezowskiMcGown2017}.

\subsection{Specialising to fixed $p$} It seems reasonable to assume detailed knowledge about the characters of order $p$ would substantially improve the upper bound $f_0$. This is precisely the approach taken in~\cite{BaggerNorm2026} in the case of $p=3$. For example, one should be able to prove an analogue of Vinogradov's trick for cyclic cubic fields: a large $q_1$-value forces $q_2$ to be much smaller than what is initially enforced by Proposition~\ref{P: q1q2}. If the bound $f_0$ can be sufficiently reduced, one could then employ a computational approach to attack the region of unknown from below.

\subsection{Optimising $h = h(f,r)$} Throughout this paper, we set $h=\lceil f^{1/2r}\rceil$ to avoid the $W(f,h,r)$-factor growing unwieldy. Rather, it should be better to instead minimise $h\cdot W(f,h,r) = (2r-1)h + r!f^{1/2}h^{1-r}$. This suggests that one should prefer choosing $h = \lceil k(r) f^{1/2r}\rceil$, for some function $k(r) \approx ((r-1)r!/(2r-1))^{1/r}$.

\end{document}